\documentclass[11pt,reqno]{amsart}
\usepackage[T1]{fontenc}
\usepackage{lmodern}
\usepackage[a4paper,margin=28mm]{geometry}
\usepackage{amsmath,amssymb,mathtools}
\usepackage{microtype}
\usepackage{hyperref}
\hypersetup{hidelinks,
  pdftitle={Finite-sum identities and supercongruences for cubes of Catalan numbers},
  pdfauthor={}}
\allowdisplaybreaks[1]
\numberwithin{equation}{section}
\newtheorem{theorem}{Theorem}[section]
\newtheorem{lemma}[theorem]{Lemma}
\newtheorem{proposition}[theorem]{Proposition}
\newtheorem{corollary}[theorem]{Corollary}
\theoremstyle{remark}

\title[Finite identities and Catalan supercongruences]
{Finite-sum identities and supercongruences\\
for cubes of Catalan numbers}
\author{Yuqi Liu}
\address{School of Mathematical Sciences, Capital Normal University, Beijing 100048, P.R.~China}
\email{lyq204219@163.com}
\date{}
\subjclass[2020]{Primary 11A07; Secondary 05A19, 11B65}
\keywords{Catalan number, supercongruence, finite-sum identity,
binary quadratic form}

\begin{document}

\begin{abstract}
Let \(C_k=\binom{2k}{k}/(k+1)\) be the \(k\)-th Catalan number, and let
\(p\) be an odd prime. We establish exact finite-sum identities that,
together with known binomial supercongruences and reductions of Sun, yield
\[
 \sum_{k=0}^{(p-1)/2}\frac{C_k^3}{16^k}
 \equiv2-24y^2-\frac{5p^2}{2y^2}\pmod{p^3}
\]
when \(p=x^2+3y^2\equiv1\pmod3\), and
\[
 \sum_{k=0}^{(p-1)/2}\frac{C_k^3}{(-8)^k}
 \equiv-1-48y^2-\frac{33p^2}{16y^2}\pmod{p^3}
\]
when \(p=x^2+4y^2\equiv1\pmod4\). We also evaluate the corresponding
sum with denominator \(16^k(k+1)^2\), and obtain two weighted Catalan
supercongruences. The finite identities follow from the ratios of
consecutive summands, symmetry of double sums, and telescoping.
All endpoint terms are retained over the rational numbers before any
modular reduction. In each bilinear identity, one endpoint denominator
equals \(p\); its cancellation produces the necessary term of order
\(p^2\). The precise arithmetic inputs are stated separately.
\end{abstract}

\maketitle

\section{Introduction}

For a nonnegative integer \(k\), the central binomial coefficient and
the Catalan number are, respectively,
\[
 \binom{2k}{k}=\frac{(2k)!}{(k!)^2},
\qquad C_k=\frac1{k+1}\binom{2k}{k}.
\]
For the standard explicit formula for \(C_k\), see
Stanley \cite[Theorem~1.4.1, Eq.~(1.6), p.~4]{Stanley2015}.
Congruences for truncated sums of products of binomial coefficients
are often expressed in terms of representations of a prime by a binary
quadratic form. In the case relevant here, let
\[
 p=x^2+3y^2\equiv1\pmod3
\]
be a prime, with \(x,y\in\mathbb Z\).
Z.-H.~Sun \cite{SunLegendreII} obtained the unweighted evaluation
\begin{equation}\label{eq:known-A-mod-p2}
 \sum_{k=0}^{(p-1)/2}\frac{\binom{2k}{k}^3}{16^k}
 \equiv4x^2-2p=2p-12y^2\pmod{p^2}.
\end{equation}
For the sums with quadratic and cubic denominators, Sun
\cite[Theorem~4.3]{SunQuadratic2022} proved
\begin{align}
 \sum_{k=0}^{(p-1)/2}
 \frac{\binom{2k}{k}^3}{16^k(k+1)^2}
 &\equiv2p-24y^2\pmod{p^2},\label{eq:known-square-p2}\\
 \sum_{k=0}^{(p-1)/2}\frac{C_k^3}{16^k}
 &\equiv2-24y^2\pmod{p^2}.\label{eq:known-Catalan-mod-p2}
\end{align}
Here and below the theorem number in \cite{SunQuadratic2022} refers
specifically to arXiv version~4, dated 24 November 2022.
For a prime \(p=x^2+4y^2\equiv1\pmod4\), Sun
\cite[Theorem~4.6]{SunQuadratic2022} proved
\[
 \sum_{k=0}^{(p-1)/2}\frac{C_k^3}{(-8)^k}
 \equiv-1-48y^2\pmod{p^2}.
\]
For further binomial congruences with reciprocal polynomial weights,
see Sun \cite{Sun2022,Sun2023}.
Cubes of Catalan numbers with denominator \(64^k\) were studied by
Tauraso \cite{Tauraso}.

We begin with three evaluations modulo \(p^3\) associated with
the denominator \(16^k\). We derive the auxiliary evaluation by
finite-sum identities and combine it with the reduction formulas of
Sun \cite[Corollary~4.1]{SunQuadratic2022}.

\begin{theorem}\label{thm:main}
Let \(p\equiv1\pmod3\) be a prime, and let \(x,y\in\mathbb Z\) satisfy
\(p=x^2+3y^2\). Then
\begin{equation}\label{eq:main}
 \sum_{k=0}^{(p-1)/2}\frac{C_k^3}{16^k}
 \equiv2-24y^2-\frac{5p^2}{2y^2}\pmod{p^3}.
\end{equation}
\end{theorem}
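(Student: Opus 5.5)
We would reduce \eqref{eq:main} to a statement about the simpler sums with binomial cubes, following the route indicated before the theorem. Write \(n=(p-1)/2\) and set
\[
 A_j=\sum_{k=0}^{n}\frac{\binom{2k}{k}^3}{16^k(k+1)^j},\qquad j=0,1,2,3,
\]
so that the left side of \eqref{eq:main} is \(A_3\). The reduction formulas of Sun \cite[Corollary~4.1]{SunQuadratic2022} express \(A_3\) modulo \(p^3\) as a rational combination of \(A_0\), \(A_1\) and boundary terms. These formulas come from the ratio \(\binom{2k+2}{k+1}/\binom{2k}{k}=2(2k+1)/(k+1)\). Concretely, one writes \((2k+1)^3/(k+1)^3\) as a polynomial in \(1/(k+1)\) and telescopes \(u_{k+1}-u_k\) with \(u_k=\binom{2k}{k}^3/16^k\). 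The top endpoint \(u_{n+1}\) contains \(\binom{p+1}{(p+1)/2}^3\), which is divisible by \(p^3\). Hence the boundary contributions can be controlled, provided they are kept exactly over \(\mathbb Q\) before reducing.

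The first step is to redo this telescoping by hand with all endpoint terms retained. This gives an exact identity of the form \(aA_3+bA_2+cA_1+dA_0=\text{(endpoint terms)}\) with explicit rationals, possibly together with a second relation linking \(A_2\) and \(A_1\). The second step is to pin \(A_0\) modulo \(p^3\). Here we would invoke the known supercongruence
\[
 A_0\equiv 4x^2-2p-\frac{p^2}{4x^2}\pmod{p^3}
\]
(Z.-H.~Sun's refinement of \eqref{eq:known-A-mod-p2}). Using \(x^2=p-3y^2\), we rewrite \(1/x^2\equiv-\tfrac1{3y^2}-\tfrac{p}{9y^4}\) modulo \(p^2\); only the leading part matters at order \(p^2\).

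The third step, the auxiliary evaluation, is the genuinely new ingredient and the main obstacle. We need one more sum, say \(A_1\) or \(A_2\), modulo \(p^3\), and the mod \(p^2\) value \eqref{eq:known-square-p2} is not enough. Our plan is a bilinear (double-sum) identity. We would expand \(1/(k+1)\) via \(\binom{2k}{k}\)-type convolutions, symmetrize the resulting double sum over \(0\le j,k\le n\), and telescope in one variable. One endpoint then carries the denominator \(2n+1=p\), and the numerator vanishes to order \(p^3\) there. After cancelling \(p\), this endpoint produces an explicit term of order \(p^2\), computable via \(\binom{2n}{n}\equiv(-1)^n 4^{2n}\cdot(\text{Fermat-quotient corrections})\) and \(\binom{2n}{n}^2\equiv(-1)^n\binom{... }{}\)-type Morita/Gauss-factorial evaluations. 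The delicate point is tracking this single \(p^2\) term exactly, since every other endpoint is \(O(p^3)\). We would first test the resulting identity numerically for \(p=7,13,19\).

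Finally, we substitute the values of \(A_0\) and the auxiliary sum into the reduced expression for \(A_3\). The mod \(p^2\) part must reproduce \(2-24y^2\), which is a consistency check against \eqref{eq:known-Catalan-mod-p2}. Collecting the \(p^2\) terms and using \(x^2\equiv-3y^2\pmod p\) should give exactly \(-5p^2/(2y^2)\).
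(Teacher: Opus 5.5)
Your outline has a genuine gap: it undercounts the unknowns. The ratio \(u_{k+1}/u_k=(2k+1)^3/(2(k+1)^3)\) has degree \(3\). Hence the sums \(\sum_{k\le n}R(k)u_k\) reduce under telescoping to a three-dimensional span, apart from constants and multiples of \(u_{n+1}\equiv0\pmod{p^3}\). Sun's Corollary~4.1 expresses \(A_3\) through \(A_0\), \(B=\sum_{k=0}^{n}k\,u_k\) and \(A_1\):
\[
A_3\equiv2-18A_0+36B+24A_1\pmod{p^3}.
\]
Equivalently \(A_3\equiv2+6A_0-12A_1+6A_2\), with \(A_2\equiv-4A_0+6B+6A_1\). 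No telescoping relation links \(A_2\) and \(A_1\) alone. So \(A_0\) plus ``one more sum'' modulo \(p^3\) does not determine \(A_3\); you need a third independent input modulo \(p^3\).

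The same issue blocks your auxiliary step. The paper evaluates \(\sum_{i,j}(2i+1)^3u_iu_j/(i+j+1)\) once by symmetrizing and once by shifting \(i\mapsto i+1\). This does not give a linear evaluation of \(A_1\). It gives the quadratic relation
\[
A_0A_1-A_0^2-3B^2\equiv p^2u_n^2\equiv p^2\pmod{p^3},
\]
where the cross terms \(ij\) unavoidably produce \(B^2\). Your endpoint mechanism (denominator \(2n+1=p\) cancelling to leave a \(p^2\) term) is right. But to extract \(A_1\) you must already know \(B\) modulo \(p^3\).

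The missing input is the divergent Ramanujan-type supercongruence \(\sum_{k=0}^{n}(3k+1)\binom{2k}{k}^3/16^k\equiv p\pmod{p^3}\), i.e.\ \(A_0+3B\equiv p\). It is due to Guillera--Zudilin, and Mao--Zhang proved it modulo \(p^4\). With it, \(A_1\equiv\frac43A_0-\frac23p+\frac{4p^2}{3A_0}\), hence \(A_3\equiv2+2A_0-4p+32p^2/A_0\). Sun--Ye's value of \(A_0\) then yields \(2-24y^2+15p^2/(2x^2)\equiv2-24y^2-5p^2/(2y^2)\). Without this congruence, or a substitute you would have to supply (such as a second independent bilinear relation), the plan cannot be completed.

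A minor point: the endpoint term is \(p^2u_n^2\), so you only need \(u_n^2\equiv1\pmod p\). Morley-type or Gauss-factorial refinements are unnecessary.
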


The following auxiliary evaluation will be proved first.

\begin{theorem}\label{thm:aux}
Under the assumptions of Theorem~\ref{thm:main},
\begin{equation}\label{eq:aux}
 \sum_{k=0}^{(p-1)/2}
 \frac{\binom{2k}{k}^3}{16^k(k+1)}
 \equiv2p-16y^2\pmod{p^3}.
\end{equation}
\end{theorem}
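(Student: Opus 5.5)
The plan is to express the weighted sum \(S_1=\sum_{k=0}^{n}a_k/(k+1)\), where \(n=(p-1)/2\) and \(a_k=\binom{2k}{k}^3/16^k\), through the unweighted sum \(S_0=\sum_{k=0}^{n}a_k\) and explicit endpoint terms. This identity is to be exact over \(\mathbb Q\); only afterwards will it be reduced modulo \(p^3\) using known evaluations. The starting point is the ratio of consecutive summands,
\[
 a_{k+1}=\frac{(2k+1)^3}{2(k+1)^3}\,a_k .
\]
Multiplying by a polynomial \(f(k)\) and telescoping over \(0\le k\le n\) gives
\[
 2f(k)(k+1)^3a_{k+1}-\bigl(\text{shifted terms}\bigr)
 =\sum_k R(k)\,a_k ,
\]
in which the remainder \(R(k)\) contains \(a_k\), \(a_k/(k+1)\) and possibly \(a_k/(k+1)^2\). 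I will choose \(f\) of low degree, first trying \(f(k)=\alpha+\beta k\) and matching coefficients after partial fractions in \(k+1\). The goal is a relation of the form
\[
 c_1S_1+c_0S_0+c_2S_2=\text{(boundary at }k=n+1)-\text{(boundary at }k=0),
 \qquad S_2=\sum_{k=0}^n\frac{a_k}{(k+1)^2}.
\]

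If one relation cannot remove both \(S_1\) and \(S_2\) simultaneously, I will use a second, bilinear identity. It comes from the double sum \(\sum_{j,k}\) of a product \(a_jb_k\) over the triangle \(0\le j\le k\le n\), rewritten by the symmetry \(j\leftrightarrow k\). Here \(b_k\) is a companion sequence with the same ratio structure, such as \(\binom{2k}{k}^3/16^k\) shifted by one index. Summing such an identity by telescoping in one variable yields the second linear relation. Solving the two relations expresses \(S_1\) in terms of \(S_0\), \(S_2\) and endpoint terms, with rational coefficients independent of \(p\).

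The arithmetic step treats the boundary contributions and the reductions separately.

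\emph{Boundary terms.} At the upper endpoint the terms involve \(a_{n+1}\), whose factor \(\binom{p+1}{(p+1)/2}^3\) is divisible by \(p^3\). In the bilinear identity this factor is multiplied by a rational coefficient whose denominator is \(2k+1=p\). After cancellation this leaves an explicit multiple of \(p^2\) times
\[
 \Bigl(\tfrac{1}{p}\tbinom{p+1}{(p+1)/2}\Bigr)^3 16^{-(n+1)} .
\]
I will evaluate this expression modulo \(p\) via \(\binom{2n}{n}\equiv(-1)^n4^{p-1}\binom{n}{n/2}^2\)-type reductions, or directly via \(\binom{p-1}{n}\equiv(-1)^n\). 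It becomes an expression in \(x\), \(y\) through the classical congruence \(\binom{n}{\lfloor n/3\rfloor}\)-style evaluation \(\equiv\) \(2x\) or \(2y\) related quantity (Gauss/Jacobi).

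\emph{Reductions.} I will substitute the known evaluation of \(S_0\) modulo \(p^3\). This is the modulo-\(p^3\) refinement of \eqref{eq:known-A-mod-p2}, namely \(4x^2-2p-p^2/(4x^2)\), which I take as a stated arithmetic input. I will also use the relation between \(S_2\) and \(S_0,S_1\) given by Sun's reduction \cite[Corollary~4.1]{SunQuadratic2022} together with \eqref{eq:known-square-p2}. The final check is that the \(p^2/x^2\) contributions from \(S_0\) and from the endpoint cancel, leaving \(2p-16y^2\).

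The main obstacle is finding the right telescoping certificate. The certificate must isolate \(S_1\) without producing a harmonic-type sum that is only known modulo \(p^2\). It must also make the boundary term of exact order \(p^2\), so that its cancellation is correct modulo \(p^3\). I would search for it by Gosper/Zeilberger on \(a_k/(k+1)\) with undetermined rational \(f\). If that fails, I would work with the double sum \(\sum_{j\le k}a_j a_k\)-type symmetry and telescope in \(k\) first.
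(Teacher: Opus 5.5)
Your proposal has a genuine gap: the relations you plan to find cannot determine $S_1$ modulo $p^3$ from the inputs you list.

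First, the linear certificate you are looking for does not exist. Sun's reduction reads $S_2\equiv-4S_0+6B+6S_1\pmod{p^3}$ with $B=\sum_{k=0}^{n}ka_k$, so it involves the first moment $B$, not only $S_0$ and $S_1$. Now take rational $g$ and consider $g(k+1)\tfrac{(2k+1)^3}{2(k+1)^3}-g(k)$.
\begin{itemize}
\item Any pole of $g$ propagates to an uncancelled pole of this difference away from $k=-1$.
\item Its polynomial part has leading coefficient $3g_d\neq0$.
\end{itemize}
Together these show the difference never equals $c_0+\frac{c_1}{k+1}+\frac{c_2}{(k+1)^2}$ unless $g=0$ and all $c_i=0$. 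Thus $S_0,B,S_1$ are independent modulo telescoping, no relation $c_1S_1+c_0S_0+c_2S_2=(\text{boundary})$ exists, and a Gosper-type search will fail.

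Second, the symmetric double-sum identity does not give a ``second linear relation with coefficients independent of $p$.'' What it gives (Lemma~\ref{lem:bilinear} and Proposition~\ref{prop:p-adic-bilinear}) is the genuinely quadratic congruence $S_0S_1-S_0^2-3B^2\equiv p^2\pmod{p^3}$. From it, $S_1$ can be extracted only after dividing by the unit $S_0$ \emph{and} knowing $B$ modulo $p^3$.

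That is the missing idea: a second modulo-$p^3$ arithmetic input. It is the Ramanujan-type supercongruence $\sum_{k=0}^{n}(3k+1)a_k\equiv p\pmod{p^3}$, which the paper takes from Mao--Zhang (see also Guillera--Zudilin). It gives $B\equiv(p-S_0)/3$, and hence
$S_1\equiv\frac43S_0-\frac23p+\frac{4p^2}{3S_0}$.

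Your substitutes cannot replace it. \eqref{eq:known-square-p2} holds only modulo $p^2$, so with Sun's reduction it fixes $B+S_1$ only modulo $p^2$. Inserted into the quadratic relation, it gives $B(3B+S_0)\equiv0\pmod p$, which has two roots. Even the correct root lifts only to $S_1$ modulo $p^2$. Using Theorem~\ref{thm:quadratic} instead would be circular, since the paper deduces it from Theorem~\ref{thm:aux}.

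Your endpoint analysis is also misdirected. The resonant term is $p^2a_n^2$, and $a_n^2=\binom{p-1}{n}^6/16^{p-1}\equiv1\pmod p$. So no Gauss--Jacobi evaluation and no dependence on $x,y$ enters at the endpoint. The $p^2/x^2$ cancellation is between $\frac43S_0$ and $\frac{4p^2}{3S_0}$. The latter collects both the endpoint contribution $p^2/S_0$ and the contribution $3B^2/S_0=(p-S_0)^2/(3S_0)$.
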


Theorem~\ref{thm:aux} gives the \(p\equiv1\pmod3\) case of the first
congruence in \cite[Conjecture~5.9]{Sun2022}.
The same auxiliary evaluation also yields the quadratic-denominator
congruence in the next theorem.

\begin{theorem}\label{thm:quadratic}
Under the assumptions of Theorem~\ref{thm:main},
\begin{equation}\label{eq:quadratic-mod-p3}
 \sum_{k=0}^{(p-1)/2}
 \frac{\binom{2k}{k}^3}{16^k(k+1)^2}
 \equiv2p-24y^2-\frac{p^2}{2y^2}\pmod{p^3}.
\end{equation}
\end{theorem}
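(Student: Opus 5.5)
The plan is to derive Theorem~\ref{thm:quadratic} from Theorem~\ref{thm:aux} together with Theorem~\ref{thm:main}, using only the partial-fraction relation between the three weights. Write \(n=(p-1)/2\) and
\[
 S_j=\sum_{k=0}^{n}\frac{\binom{2k}{k}^3}{16^k(k+1)^j}\qquad(j=0,1,2,3),
\]
so that \(\sum_{k=0}^n C_k^3/16^k=S_3\). Sun's reduction formulas \cite[Corollary~4.1]{SunQuadratic2022} express \(S_2\) and \(S_3\) modulo \(p^3\) as linear combinations of \(S_0\) and \(S_1\), up to explicit endpoint terms. These formulas come from the two-term recurrence \((k+1)^3a_{k+1}=(2k+1)^3a_k/8\cdot\ldots\) satisfied by \(a_k=\binom{2k}{k}^3/16^k\).

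The first step is to derive such a relation exactly over \(\mathbb Q\). Following the ratio method of the abstract, I would look for a rational function \(R(k)\) with
\[
 a_{k+1}R(k+1)-a_kR(k)=a_k\Bigl(\frac{\alpha}{(k+1)^2}+\frac{\beta}{k+1}+\gamma\Bigr).
\]
Here \(a_{k+1}/a_k=(2k+1)^3/(8(k+1)^3)\). A natural ansatz is \(R(k)=c_0+c_1k\) or \(R(k)=k^2/\cdot\), with coefficients determined by matching powers. Telescoping from \(k=0\) to \(n\) then gives
\[
 \alpha S_2+\beta S_1+\gamma S_0=a_{n+1}R(n+1)-a_0R(0).
\]
The boundary term \(a_{n+1}=\binom{p+1}{(p+1)/2}^3/16^{n+1}\) is divisible by \(p^3\), up to the denominator contributed by \(R(n+1)\). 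The main care is that \(R\) may carry a factor \(1/(2k+1)\) or similar that equals \(1/p\) at the endpoint. In that case the boundary contributes a term of order \(p^2\), and it must be computed with \(\binom{2n}{n}\) modulo \(p\), which is given by Gauss's congruence \(\binom{2n}{n}\equiv\pm2x\pmod p\) (up to sign and normalization). This term is the source of \(-p^2/(2y^2)\), since \(x^2\equiv-3y^2\pmod p\) converts \(1/x^2\) into \(-1/(3y^2)\).

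Next, substitute Theorem~\ref{thm:aux} (\(S_1\equiv2p-16y^2\)) and the value of \(S_0\) modulo \(p^3\). The value of \(S_0\) is not stated earlier beyond modulo \(p^2\), so I would instead eliminate \(S_0\) using the analogous relation for \(S_3\). Then Theorem~\ref{thm:main}, which the paper proves via the same reduction, supplies \(S_3\equiv2-24y^2-5p^2/(2y^2)\). In that case \(S_2\) follows from the identity
\[
 \frac{\binom{2k}{k}^3}{(k+1)^3}\text{-relations}\colon\quad S_3=S_2-\sum\frac{k\,a_k}{(k+1)^3},
\]
combined with the telescoped relation expressing a combination of \(S_1,S_2,S_3\) exactly.

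The main obstacle is to pin down the exact telescoping certificate and to evaluate the endpoint term modulo \(p^3\) precisely. That requires \(\binom{p-1}{n}\)-type quantities modulo \(p\), via Gauss, and a check that the coefficient in front of \(S_2\) is a \(p\)-adic unit. I would first search for \(R\) of degree \(\le2\) divided by \((2k+1)\), solve the linear system, and then verify the result numerically for \(p=7,13\) before finalizing the constants.
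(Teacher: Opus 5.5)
There is a genuine gap: the relations you propose cannot pin down \(S_2\) modulo \(p^3\) from \(S_1\) and \(S_3\) alone. As a side point, the ratio is \(a_{k+1}/a_k=(2k+1)^3/(2(k+1)^3)\), with a \(2\) and not an \(8\).

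With the correct ratio and \(u=k+1\), the certificates \(R=1\) and \(R=k\) produce the weights \(3-6u^{-1}+3u^{-2}-\tfrac12u^{-3}\) and \(3u-5+3u^{-1}-\tfrac12u^{-2}\). The certificates \(R=k^j\) with \(j\ge2\) produce weights with leading term \(3u^j\). A rational \(R\) with a factor \(1/(2k\pm1)\) is forced by the pole structure to be a polynomial. So modulo telescoping \(\{1,u,u^{-1}\}\) is a basis, and \(u^{-2}\equiv6u-10+6u^{-1}\). This is exactly Sun's \(S_2=-4S_0+6B+6S_1-2(n+1)a_{n+1}\) with \(B=\sum_{k=0}^nka_k\).

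Consequently no certificate of your shape \(\alpha S_2+\beta S_1+\gamma S_0=\text{boundary}\) exists. Contrary to your description, Sun's reductions are not combinations of \(S_0,S_1\) alone; they contain \(B\). Your fallback fails too. The only telescoping relation among \(S_0,\dots,S_3\) is the one from \(R=1\), namely \(3S_0-6S_1+3S_2-\tfrac12S_3=a_{n+1}-1\). That is, \(S_2\equiv\tfrac16S_3-\tfrac13-S_0+2S_1\pmod{p^3}\), and \(S_0\) survives with coefficient \(-1\). Equivalently, the parts \(-4S_0+6B\) and \(-18S_0+36B\) of Sun's two reductions are not proportional, so knowing \(S_1\) and \(S_3\) does not determine \(S_2\).

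The missing ingredient is a further arithmetic input modulo \(p^3\). One option is Sun--Ye's \(S_0\equiv4x^2-2p-p^2/(4x^2)\); inserting it into the \(R=1\) relation finishes at once. The other is Mao--Zhang's \(S_0+3B\equiv p\). It turns Sun's reductions into \(S_2\equiv-6S_0+2p+6S_1\) and \(S_3\equiv2-30S_0+12p+24S_1\), whence \(S_2\equiv(S_3-2-2p+6S_1)/5\) (note \(p\ne5\)). Theorems~\ref{thm:aux} and~\ref{thm:main} then give the claim, and invoking Theorem~\ref{thm:main} is not circular. The paper itself uses \(S_2\equiv-4A+6B+6T\), Mao--Zhang, Sun--Ye and Theorem~\ref{thm:aux}.

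Your endpoint analysis is also wrong. First, \(\binom{2n}{n}=\binom{p-1}{n}\equiv(-1)^n\pmod p\), not \(\pm2x\). Second, for polynomial certificates the endpoint \(a_{n+1}R(n+1)\) is \(0\) modulo \(p^3\). The term \(-p^2/(2y^2)\) is inherited from the correction \(-p^2/(4x^2)\) in Sun--Ye's value of \(S_0\); it is not created by an endpoint.
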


Theorem~\ref{thm:quadratic} proves the first congruence in
Sun \cite[Conjecture~5.1(ii)]{Sun2023}, since
\(\binom{2k}{k}C_k^2=\binom{2k}{k}^3/(k+1)^2\).
Theorems~\ref{thm:main} and~\ref{thm:quadratic} refine
\eqref{eq:known-Catalan-mod-p2} and~\eqref{eq:known-square-p2}, respectively.

For the negative denominator, the analogous evaluations are as follows.

\begin{theorem}\label{thm:minus-eight}
Let \(p\equiv1\pmod4\) be a prime, and let \(x,y\in\mathbb Z\)
satisfy \(p=x^2+4y^2\). Then
\begin{equation}\label{eq:minus-eight-main}
 \sum_{k=0}^{(p-1)/2}\frac{C_k^3}{(-8)^k}
 \equiv-1-48y^2-\frac{33p^2}{16y^2}\pmod{p^3}.
\end{equation}
\end{theorem}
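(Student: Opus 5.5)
I would prove Theorem~\ref{thm:minus-eight} in the same way as Theorem~\ref{thm:main}. Write
\[
 A_j=\sum_{k=0}^{(p-1)/2}\frac{\binom{2k}{k}^3}{(-8)^k(k+1)^j},\qquad j=0,1,2,3.
\]
Then the target sum is \(A_3\). The plan has three steps: express \(A_3\) through \(A_0,A_1,A_2\); express \(A_2\) through \(A_0,A_1\); and evaluate \(A_0\) and \(A_1\) modulo \(p^3\) using known inputs.

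\textbf{Step 1 (reduction).} Use Sun's reduction formulas (the \((-8)^k\) analogue of \cite[Corollary~4.1]{SunQuadratic2022}) to write \(A_3\) as a rational linear combination of \(A_0,A_1,A_2\), plus an explicit endpoint term.

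These formulas come from telescoping. Let \(a_k=\binom{2k}{k}^3/(-8)^k\). Then
\[
 \frac{a_{k+1}}{a_k}=-\frac{(2k+1)^3}{(k+1)^3}.
\]
So for a suitable polynomial \(P\), the difference \(a_{k+1}P(k+1)/(k+1)^{j}-a_kP(k)/k^{j}\) is a combination of the \(a_k/(k+1)^i\).

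In this derivation I would keep every endpoint term at \(k=(p+1)/2\) exactly over \(\mathbb Q\). The coefficient \(\binom{p+1}{(p+1)/2}\) is divisible by \(p\), and a denominator \(p\) may appear. The cancellation between these two is exactly where the \(p^2\) correction arises, so the endpoint must not be reduced modulo \(p\) too early.

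\textbf{Step 2 (the quadratic sum).} Reduce \(A_2\) in the same way, to \(A_0\), \(A_1\) and an endpoint term.

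\textbf{Step 3 (the inputs \(A_0\) and \(A_1\)).}
\begin{itemize}
 \item For \(A_0\), take the known evaluation
 \[
 \sum_{k=0}^{(p-1)/2}\frac{\binom{2k}{k}^3}{(-8)^k}\equiv 4x^2-2p\pmod{p^2}.
 \]
 Lifting this to \(p^3\) is not possible in general. I expect instead that \(A_0\) enters the final combination only multiplied by \(p\), or together with \(p^2/y^2\)-type terms that can be controlled. This should be visible once the coefficients from Steps 1 and 2 are written out.
 \item For \(A_1\), prove an auxiliary evaluation \(A_1\equiv c_1p+c_2y^2\pmod{p^3}\), analogous to Theorem~\ref{thm:aux}. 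I would derive it from a finite bilinear identity. Express \(\binom{2k}{k}^3\) via a double sum symmetric in two indices, and symmetrize. Then telescope to reduce \(A_1\) to \(A_0\) times a factor that is \(O(p)\), plus boundary terms computable modulo \(p^3\).
\end{itemize}

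\textbf{Assembly.} The \(y\)-dependence of the \(p^2\) term should come from expressing \(x^2=p-4y^2\). In the boundary term, a factor of the form \(p^2/\binom{(p-1)/2}{(p-1)/4}^2\) appears. Gauss's congruence
\[
 \binom{(p-1)/2}{(p-1)/4}\equiv 2x\pmod p
\]
(with the appropriate sign convention) converts it into a rational function of \(x\) and \(y\) modulo \(p^3\). Using \(x^2\equiv-4y^2\pmod p\), this becomes \(p^2/y^2\) times a constant. Collecting constants should then give \(-1-48y^2-33p^2/(16y^2)\).

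\textbf{Main obstacle.} I expect the hardest step to be the mod \(p^3\) evaluation of \(A_1\). One must find the right symmetric double-sum identity for the \((-8)^k\) weight, and track the endpoint denominator \(p\) so that its cancellation yields the precise \(p^2\) coefficient. I would first try to mirror the argument used for Theorem~\ref{thm:aux}, replacing the ratio \((2k+1)^3/(4(k+1)^3)\) by \(-(2k+1)^3/(k+1)^3\). I would then check the constant \(-33/16\) numerically for \(p=5,13,17\) before finalizing.
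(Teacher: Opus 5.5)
There are genuine gaps in your plan, and they sit in the arithmetic inputs rather than in the telescoping. First, Step~2 cannot be carried out as stated. Put \(m=(p-1)/2\), \(b_k=\binom{2k}{k}^3/(-8)^k\) and \(H=\sum_{k=0}^m kb_k\). Since \((2k+1)^3/(k+1)^3=8-12u^{-1}+6u^{-2}-u^{-3}\) with \(u=k+1\), telescoping \(b_k\) and \(kb_k\) gives exactly
\[
 A_3=9A_0-12A_1+6A_2-1+b_{m+1},\qquad
 A_2=9H-4A_0+6A_1+(m+1)b_{m+1}.
\]
Each telescoping step introduces a new moment, so the most you obtain is \(A_3\equiv-1-15A_0+54H+24A_1\pmod{p^3}\). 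This is Sun's reduction with parameter \(-\tfrac12\). The first moment \(H\) cannot be eliminated, and your plan has no input for it. The paper uses the Guillera--Zudilin congruence \(\sum_{k=0}^m(3k+1)b_k\equiv p\pmod{p^3}\), that is, \(A_0+3H\equiv p\). Also, the endpoint terms in these linear reductions are harmless: \(b_{m+1}=-p^3b_m/(m+1)^3\equiv0\pmod{p^3}\), and no denominator \(p\) appears. The resonant denominator \(p\) occurs only in the bilinear identity.

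Second, your treatment of \(A_0\) throws away exactly the information that produces the answer. The congruence \(A_0\equiv4x^2-2p-p^2/(4x^2)\pmod{p^3}\) is a theorem of Sun and Ye, and it is indispensable.

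The paper's bilinear identity is
\[
 A_0A_1-A_0^2-\tfrac92H^2=p^3b_m\Bigl(\sum_{j=0}^m \frac{b_j}{m+j+1}-\frac{A_0}{m+1}\Bigr).
\]
It is obtained by evaluating \(\sum_{i,j}(2i+1)^3b_ib_j/(i+j+1)\) once by symmetrization and once via the shift \(i\mapsto i+1\). Its only resonant term is \(j=m\), which contributes \(p^2b_m^2\equiv p^2\) because \(b_m^2\equiv1\pmod p\). No Gauss congruence for \(\binom{(p-1)/2}{(p-1)/4}\) enters, and all dependence on \(x,y\) comes from the cited inputs.

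Combining the three relations gives \(A_1\equiv\tfrac32A_0-p+3p^2/(2A_0)\) and
\[
 \sum_{k=0}^m\frac{C_k^3}{(-8)^k}\equiv-1-6p+3A_0+\frac{36p^2}{A_0}\pmod{p^3}.
\]
So \(A_0\) enters with the unit coefficient \(3\), not multiplied by \(p\). Knowing \(A_0\) only modulo \(p^2\) leaves the \(p^2\) term undetermined.

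With Sun and Ye's formula, the \(p^2/x^2\) contributions in \(A_1\) cancel, giving \(A_1\equiv2p-24y^2\). The final correction is then \(-33\cdot(-p^2/(4x^2))=33p^2/(4x^2)\equiv-33p^2/(16y^2)\), using \(x^2\equiv-4y^2\pmod p\). Your Step~3 for \(A_1\), which reduces it to \(A_0\) times an \(O(p)\) factor, therefore also points the wrong way. In fact \(A_1\equiv\tfrac32A_0\pmod p\) to leading order, and \(A_1\) is determined only after \(A_0\) and \(H\) are both known modulo \(p^3\).
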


\begin{theorem}\label{thm:minus-eight-aux}
Under the assumptions of Theorem~\ref{thm:minus-eight},
\begin{equation}\label{eq:minus-eight-aux}
 \sum_{k=0}^{(p-1)/2}
 \frac{\binom{2k}{k}^3}{(-8)^k(k+1)}
 \equiv2p-24y^2\pmod{p^3}.
\end{equation}
\end{theorem}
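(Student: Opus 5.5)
The plan is to imitate the route used for Theorem~\ref{thm:aux}: derive an exact rational identity for the truncated sum and only then reduce modulo \(p^3\). Write \(n=(p-1)/2\) and \(a_k=\binom{2k}{k}^3/(-8)^k\), so that
\[
 \frac{a_{k+1}}{a_k}=-\frac{(2k+1)^3}{(k+1)^3}.
\]
Expanding \((2k+1)^3=8(k+1)^3-12(k+1)^2+6(k+1)-1\) and dividing by \((k+1)^2\) gives the exact relation
\[
 \frac{a_k}{k+1}=8(k+1)a_k-12a_k+\frac{6a_k}{k+1}+\ldots,
\]
more precisely a linear relation between \(a_k/(k+1)\), \(a_k\), \(k a_k\), \(a_k/(k+1)^2\) and the telescoping difference \((k+1)^2a_{k+1}-k^2a_k\). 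I will not sum this single relation directly, since it only reshuffles the unknown sums. Instead I will combine it with the bilinear symmetry trick described in the abstract. I write \(\binom{2k}{k}^2\) as a finite sum \(\sum_j\binom{k}{j}^2\binom{2j}{j}\cdots\)-type convolution, or equivalently use the Clausen/Dixon-type representation. I then interchange the order of the resulting double sum over \(0\le j\le k\le n\) and telescope in the inner variable. The goal is an identity of the shape
\[
 \sum_{k=0}^{n}\frac{a_k}{k+1}=\alpha\sum_{k=0}^{n}a_k+\beta\sum_{k=0}^{n}k\,a_k+E_n ,
\]
with explicit rational constants \(\alpha,\beta\) and an explicit endpoint term \(E_n\). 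All boundary terms are kept over \(\mathbb Q\).

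Next I reduce each piece modulo \(p^3\). The endpoint term \(E_n\) will involve \(\binom{2n}{n}\) and quantities such as \(\binom{2n+2}{n+1}\) or \(1/(2n+1)=1/p\). As in the \(16^k\) case, one denominator should equal \(p\), and cancelling it against a factor \(p\) coming from \(\binom{p+1}{(p+1)/2}\) leaves a genuine contribution of order \(p^2\). For the main sums I will use the known results for \(p=x^2+4y^2\):
\[
 \sum_{k=0}^{n}\frac{\binom{2k}{k}^3}{(-8)^k}\equiv(-1)^{(p-1)/4}\,(4x^2-2p)\pmod{p^2}
\]
(with the sign convention as in Sun), together with its refinement and the companion evaluation for \(\sum k\,a_k\). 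Because \(\binom{2k}{k}\equiv0\pmod p\) for \(n<k<p\), these truncated sums agree with the sums up to \(p-1\) to high \(p\)-adic order. For \(\binom{2n}{n}\) I will use the Chowla--Dwork--Evans congruence
\[
 \binom{2n}{n}\equiv\Bigl(\frac{p}{2x}+2x\Bigr)(-1)^{\ldots}\pmod{p^2},
\]
so that products like \(\binom{2n}{n}^2\) can be written in terms of \(x\), \(y\) and \(p\). Finally, \(x^2=p-4y^2\) turns the result into the form \(2p-24y^2\).

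The main obstacle is the precision bookkeeping. The quoted evaluations of \(\sum a_k\) are only modulo \(p^2\). So I must arrange, through the choice of the double-sum identity, that the unweighted sum enters with a coefficient divisible by \(p\), or that its \(p^2\)-part cancels against the \(p^2\)-part produced by \(E_n\). The fact that the claimed right-hand side has no \(p^2/y^2\) term suggests exactly such a cancellation. My first attempt is therefore to choose the multiplier in the bilinear identity so that \(\alpha\) and \(\beta\) carry a factor \(2n+1=p\). Then only mod-\(p^2\) inputs are needed, and the whole \(p^2\)-contribution comes from \(E_n\) via the Chowla--Dwork--Evans value of \(\binom{2n}{n}^2\). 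If that fails, I would fall back on Sun's mod-\(p^3\) reduction formulas relating \(\sum a_k/(k+1)\) to \(\sum a_k\).
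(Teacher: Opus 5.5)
There is a genuine gap. Write $b_k$ for your $a_k$, $m$ for your $n$, $F=\sum_{k\le m}b_k$, $H=\sum_{k\le m}kb_k$, and $J$ for the target sum. The identity you aim for, $J=\alpha F+\beta H+E_n$, cannot be obtained, and the source you expect for the $p^2$-term is misidentified.

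The one-step relation you discard is exact and essential. From $(2+4k+8k^2)(k+1)-(2k+1)^3=1$ and the recurrence, $\frac{b_k}{k+1}=(2+4k+8k^2)b_k+(k+1)^2b_{k+1}$. Hence $J=2F+4H+9S-\frac{p^3b_m}{m+1}$ with $S=\sum_{k\le m}k^2b_k$. To reach your linear shape you would have to telescope $S$ away. But Gosper's equation $-(2k+1)^3X(k+1)-k^3X(k)=q(k)$ has leading coefficient $-9$ times that of $X$, so it has no polynomial solution for nonzero $q$ of degree at most $2$.

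The obstruction is also arithmetic. We have $3H\equiv p-F$ and $F\equiv4x^2-2p-\frac{p^2}{4x^2}\pmod{p^3}$. Suppose $J\equiv\lambda F+\mu$, where the coefficients and endpoint terms involve only $m$, $b_m\equiv\pm1$ and $b_{m+1}\equiv0$, none of which sees $x$. Producing $J\equiv6x^2\pmod p$ forces $\lambda\equiv\frac32\pmod p$, so $\alpha,\beta$ cannot both carry a factor $p$. That choice then leaves an uncancelled $-\frac{3p^2}{8x^2}$. In the correct relation $J\equiv\frac32F-p+\frac{3p^2}{2F}$, this term is cancelled by something nonlinear in $F$. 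Your fallback does not help either: Sun's reduction formulas express the sums with $(k+1)^2$ and $(k+1)^3$ in terms of $F,H,J$, so they presuppose $J$.

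The missing idea is a quadratic relation, obtained by evaluating $\sum_{i,j\le m}\frac{(2i+1)^3b_ib_j}{i+j+1}$ in two ways.
\begin{itemize}
\item Symmetrizing with $(2i+1)^3+(2j+1)^3=2(i+j+1)\bigl(1+2(i+j)+4(i^2-ij+j^2)\bigr)$ gives $F^2+4FH+8FS-4H^2$.
\item Shifting $i\to i+1$ through the recurrence, isolating the row $i=m+1$ with $(m+1)^3b_{m+1}=-p^3b_m$, and symmetrizing with $i^3+j^3=(i+j)(i^2-ij+j^2)$ gives $-FS+\frac12H^2+p^3b_m\sum_{j\le m}\frac{b_j}{m+j+1}$.
\end{itemize}
Combining these with $F$ times the linear relation eliminates $S$:
\[
 FJ-F^2-\tfrac92H^2=p^3b_m\Bigl(\sum_{j=0}^{m}\frac{b_j}{m+j+1}-\frac{F}{m+1}\Bigr)\equiv p^2b_m^2\equiv p^2\pmod{p^3}.
\]
As you anticipated, the $p^2$ comes from the term $j=m$, where $m+j+1=p$. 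Here $b_m^2\equiv1$ needs only $\binom{p-1}{m}\equiv(-1)^m$ and Fermat.

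Your Chowla--Dwork--Evans step is wrong. By Morley, $\binom{2n}{n}=\binom{p-1}{(p-1)/2}\equiv(-1)^{(p-1)/2}4^{p-1}\pmod{p^3}$, which carries no information about $x$. Chowla--Dwork--Evans concerns $\binom{(p-1)/2}{(p-1)/4}$ instead.

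Finally, inputs modulo $p^2$ cannot suffice, because $J\equiv\frac32F-p+\frac{3p^2}{2F}$ depends on $F$ modulo $p^3$. You need Sun--Ye's $F\equiv4x^2-2p-\frac{p^2}{4x^2}\pmod{p^3}$, with no factor $(-1)^{(p-1)/4}$: for $p=5$ one has $F=27/8\equiv-6\pmod{25}$. You also need Guillera--Zudilin's $F+3H\equiv p\pmod{p^3}$. With both, $J\equiv6x^2-4p=2p-24y^2$.
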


The unweighted evaluations modulo \(p^3\) are due to Sun and Ye
\cite[Theorems~1.2 and~1.3]{SunYe}. The weighted congruences modulo
\(p^3\) are due to Guillera and Zudilin
\cite[Eqs.~(10) and~(12)]{GuilleraZudilin}.
For the denominator \(16^k\), we use the modulo \(p^3\) consequence of
Mao and Zhang \cite[Theorem~1.2]{MaoZhang}; their stronger congruence
modulo \(p^4\) was conjectured by Z.-W.~Sun
\cite[Conjecture~5.1(ii)]{Sun2011}.
The required arithmetic inputs are stated in
Sections~\ref{subsec:arithmetic} and~\ref{subsec:minus-eight-inputs}.
The unweighted evaluations are needed modulo \(p^3\); their
reductions modulo \(p^2\) alone do not determine the final correction terms.

For clarity, all rational congruences are interpreted in the local ring
\[
 \mathbb Z_{(p)}
 =\{r/s:r,s\in\mathbb Z,\ s\ne0,\ p\nmid s\}.
\]
For \(\alpha,\beta\in\mathbb Z_{(p)}\) and an integer \(e\ge1\), the notation
\(\alpha\equiv\beta\pmod{p^e}\) means
\(\alpha-\beta\in p^e\mathbb Z_{(p)}\).
Reduction modulo \(p\) sends \(r/s\) to the residue of \(r\) times the
inverse of the residue of \(s\). An element is a unit of
\(\mathbb Z_{(p)}\) exactly when its residue modulo \(p\) is nonzero.
If \(\alpha,\beta\) are units and
\(\alpha\equiv\beta\pmod{p^e}\), then
\[
 \alpha^{-1}-\beta^{-1}
 =\frac{\beta-\alpha}{\alpha\beta}
 \in p^e\mathbb Z_{(p)}.
\]
Thus inverses preserve this congruence. We will use this fact only
after establishing that the quantities being inverted are units.

Section~\ref{sec:main-proof} contains the exact finite identities and
the complete proofs of
Theorems~\ref{thm:main}--\ref{thm:quadratic}.
Section~\ref{sec:weighted} proves the two weighted Catalan
corollaries. Section~\ref{sec:minus-eight-proof} proves
Theorems~\ref{thm:minus-eight} and~\ref{thm:minus-eight-aux},
using the notation \(b_k,F,H,J\) for that denominator.
In each proof section, \(p\) is fixed and \(m=(p-1)/2\) throughout.
Endpoint identities are retained over \(\mathbb Q\) until any
denominator divisible by \(p\) has been canceled.

\section{Proof of the main supercongruence}\label{sec:main-proof}

Throughout this section, fix a prime \(p\equiv1\pmod3\), write
\(p=x^2+3y^2\) with \(x,y\in\mathbb Z\), and put
\[
 m=\frac{p-1}{2}.
\]
Thus \(2m+1=p\).

\subsection{Exact finite identities}

For \(k=0,1,2,\ldots\), define
\begin{equation}\label{eq:ak}
 a_k=\frac{\binom{2k}{k}^3}{16^k}.
\end{equation}
The definition of the Catalan numbers gives
\[
 \sum_{k=0}^m\frac{C_k^3}{16^k}
 =\sum_{k=0}^m\frac{a_k}{(k+1)^3}.
\]

Put
\begin{equation}\label{eq:notation}
 A=\sum_{k=0}^{m}a_k,\qquad
 B=\sum_{k=0}^{m}ka_k,\qquad
 T=\sum_{k=0}^{m}\frac{a_k}{k+1}.
\end{equation}
The term \(a_{m+1}\) is also defined by \eqref{eq:ak} and will appear
as an endpoint when a summation index is shifted.

A comparison of consecutive terms in \eqref{eq:ak} yields
\begin{equation}\label{eq:recurrence}
 \frac{a_{k+1}}{a_k}
 =\frac1{16}
  \left(\frac{(2k+2)(2k+1)}{(k+1)^2}\right)^3
 =\frac{(2k+1)^3}{2(k+1)^3}.
\end{equation}
In particular, \(a_0=1\) and
\begin{equation}\label{eq:boundary}
 2(m+1)^3a_{m+1}=p^3a_m.
\end{equation}

The next two lemmas give the endpoint identities needed to evaluate
\(T\).

\begin{lemma}\label{lem:linear}
We have
\begin{equation}\label{eq:linear}
 T=2A+4B+6\sum_{k=0}^{m}k^2a_k
       -\frac{p^3a_m}{m+1}.
\end{equation}
\end{lemma}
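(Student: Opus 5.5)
The plan is to multiply the consecutive-term relation \eqref{eq:recurrence} by a suitable power of \(k+1\), sum it over \(0\le k\le m\), and then telescope. This should produce a single exact identity over \(\mathbb Q\) that contains \(T\), and the only endpoint term should be \(a_{m+1}\), which we remove using \eqref{eq:boundary}.

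\textbf{Choosing the weight.} We want exactly one factor \(1/(k+1)\) to survive, so we write \eqref{eq:recurrence} in the form
\[
 2(k+1)^2a_{k+1}=\frac{(2k+1)^3}{k+1}\,a_k\qquad(k\ge0).
\]

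\textbf{Expanding the right side.} Put \(u=k+1\), so that \(2k+1=2u-1\). Then
\[
 \frac{(2u-1)^3}{u}=8u^2-12u+6-\frac1u .
\]
Returning to \(k\), the polynomial part becomes
\[
 8(k+1)^2-12(k+1)+6=8k^2+4k+2 .
\]
Summing the right side over \(0\le k\le m\) therefore gives
\[
 8\sum_{k=0}^m k^2a_k+4B+2A-T .
\]

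\textbf{Shifting the left side.} Reindexing \(j=k+1\), the sum of the left side is
\[
 2\sum_{j=1}^{m+1}j^2a_j=2\sum_{k=0}^{m}k^2a_k+2(m+1)^2a_{m+1}.
\]
The \(k=0\) term contributes nothing because of the factor \(k^2\). The last term is the only endpoint contribution.

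\textbf{Combining.} Equating the two sums and solving for \(T\) gives
\[
 T=2A+4B+6\sum_{k=0}^m k^2a_k-2(m+1)^2a_{m+1}.
\]
Dividing \eqref{eq:boundary} by \(m+1\) gives
\[
 2(m+1)^2a_{m+1}=\frac{p^3a_m}{m+1},
\]
and substituting this yields \eqref{eq:linear}.

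\textbf{Where care is needed.} There is no real obstacle here. The only delicate point is to keep every endpoint term exactly over \(\mathbb Q\), with no modular reduction. In particular, we must check that the shift creates no boundary term at \(k=0\), which holds because the weight \(j^2\) vanishes at \(j=0\).
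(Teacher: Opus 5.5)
Your proof is correct and follows essentially the same route as the paper. The paper states the polynomial identity \((2+4k+8k^2)(k+1)-(2k+1)^3=1\) directly, which is your division \((2u-1)^3/u=8u^2-12u+6-1/u\) rewritten in \(k\). It then sums over \(0\le k\le m\), shifts the index to isolate the single endpoint term \(2(m+1)^2a_{m+1}\), and substitutes \eqref{eq:boundary}, exactly as you do.
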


\begin{proof}
Direct multiplication gives
\[
 (2+4k+8k^2)(k+1)-(2k+1)^3=1.
\]
Together with \eqref{eq:recurrence}, this yields
\begin{align}
 (2+4k+8k^2)a_k-2(k+1)^2a_{k+1}
 &=\left(2+4k+8k^2-\frac{(2k+1)^3}{k+1}\right)a_k \notag\\
 &=\frac{a_k}{k+1}.
 \label{eq:linear-term}
\end{align}
Summing \eqref{eq:linear-term} over \(0\le k\le m\), we find
\[
 T=2A+4B+8\sum_{k=0}^{m}k^2a_k
       -2\sum_{k=0}^{m}(k+1)^2a_{k+1}.
\]
Changing the index in the last sum gives
\[
 \sum_{k=0}^{m}(k+1)^2a_{k+1}
 =\sum_{k=1}^{m+1}k^2a_k
 =\sum_{k=0}^{m}k^2a_k+(m+1)^2a_{m+1}.
\]
It follows that
\[
 T=2A+4B+6\sum_{k=0}^{m}k^2a_k-2(m+1)^2a_{m+1}.
\]
Substituting \eqref{eq:boundary} proves \eqref{eq:linear}.
\end{proof}

\begin{lemma}\label{lem:bilinear}
We have
\begin{equation}\label{eq:bilinear}
 AT-A^2-3B^2
 =p^3a_m
   \sum_{k=0}^{m}a_k
   \left(\frac1{m+k+1}-\frac1{m+1}\right).
\end{equation}
\end{lemma}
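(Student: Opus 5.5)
The identity will come from a telescoping double sum over the square \(0\le j,k\le m\), followed by symmetrization in \((j,k)\) and an application of Lemma~\ref{lem:linear}. The endpoint of the telescoping sum should produce the factor \(1/(m+k+1)\), and the endpoint term of Lemma~\ref{lem:linear} should produce the factor \(1/(m+1)\).

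\emph{Step 1: telescoping in \(k\).} For \(j,k\ge0\) I will set
\[
 G(j,k)=\frac{2k^3a_ja_k}{j+k}\quad(j+k>0),\qquad G(0,0)=0,
\]
so that \(G(j,0)=0\) for every \(j\). By \eqref{eq:recurrence},
\[
 G(j,k+1)-G(j,k)=a_ja_k\,r(j,k),\qquad
 r(j,k)=\frac{(2k+1)^3}{j+k+1}-\frac{2k^3}{j+k},
\]
where the second fraction is read as \(0\) when \(j=k=0\). Summing over \(0\le k\le m\) telescopes to \(G(j,m+1)\). By \eqref{eq:boundary},
\[
 G(j,m+1)=\frac{2(m+1)^3a_{m+1}a_j}{m+1+j}=\frac{p^3a_ma_j}{m+j+1}.
\]
Summing over \(j\) then gives
\[
 \sum_{j,k=0}^m a_ja_k\,r(j,k)=p^3a_m\sum_{j=0}^m\frac{a_j}{m+j+1}.
\]

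\emph{Step 2: symmetrization.} The summation square and the weight \(a_ja_k\) are symmetric in \(j\) and \(k\). I may therefore replace \(r(j,k)\) by \(\tfrac12\bigl(r(j,k)+r(k,j)\bigr)\) without changing the double sum. This symmetrized kernel is a polynomial, because both denominators divide the corresponding sums of cubes:
\[
 (2k+1)^3+(2j+1)^3=2(j+k+1)\bigl((2k+1)^2-(2k+1)(2j+1)+(2j+1)^2\bigr),
\]
\[
 2k^3+2j^3=2(j+k)(k^2-kj+j^2).
\]
The second factorization also covers \(j=k=0\) consistently. Expanding, the symmetrized kernel should equal \(3k^2+3j^2-3jk+2k+2j+1\). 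The double sum of this polynomial is
\[
 6A\sum_{k=0}^m k^2a_k+4AB-3B^2+A^2.
\]

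\emph{Step 3: eliminating \(\sum k^2a_k\).} By Lemma~\ref{lem:linear},
\[
 6\sum_{k=0}^m k^2a_k=T-2A-4B+\frac{p^3a_m}{m+1}.
\]
Substituting this, the \(AB\) terms cancel and the expression collapses to
\[
 AT-A^2-3B^2+\frac{p^3a_mA}{m+1}.
\]
Equating with Step~1 and writing \(A/(m+1)=\sum_k a_k/(m+1)\) gives exactly \eqref{eq:bilinear}.

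The only real difficulty is finding the kernel \(G\). Its form is forced by two requirements. First, the endpoint must reproduce \(p^3a_ma_j/(m+j+1)\) through \eqref{eq:boundary}, which dictates the factor \(2k^3/(j+k)\). Second, the resulting \(r\) must become polynomial after symmetrization, which holds by the sum-of-cubes factorizations above. Everything else is bookkeeping. I will also check the convention at \(j=k=0\) once: there \(r(0,0)=1\), which agrees with the polynomial kernel.
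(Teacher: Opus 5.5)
Your proof is correct and follows essentially the paper's argument. Your telescoping of \(G(j,k)\) in \(k\) is the paper's index shift in \eqref{eq:double-shift}, with the endpoint row isolated via \eqref{eq:boundary}, and the elimination of \(\sum_k k^2a_k\) through Lemma~\ref{lem:linear} is the same as the paper's, leading to \eqref{eq:mixed-moment}. The only cosmetic difference is that you symmetrize the combined kernel \(r(j,k)\) in one step to get \(3j^2-3jk+3k^2+2j+2k+1\), whereas the paper symmetrizes the two double sums separately.
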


\begin{proof}
We expand products of finite sums as double sums. By
\eqref{eq:recurrence},
\begin{align}
 \sum_{i,j=0}^{m}\frac{(2i+1)^3a_i a_j}{i+j+1}
 &=2\sum_{i=0}^{m}\sum_{j=0}^{m}
       \frac{(i+1)^3a_{i+1}a_j}{i+j+1} \notag\\
 &=2\sum_{i=1}^{m+1}\sum_{j=0}^{m}
       \frac{i^3a_i a_j}{i+j} \notag\\
 &=2\sum_{i=1}^{m}\sum_{j=0}^{m}
       \frac{i^3a_i a_j}{i+j}
   +p^3a_m\sum_{j=0}^{m}\frac{a_j}{m+j+1}.
 \label{eq:double-shift}
\end{align}
The last equality follows by isolating the term with \(i=m+1\) and
then using \eqref{eq:boundary}.
The change of index moves the interval \(0\le i\le m\) to
\(1\le i\le m+1\). Thus the final row must be kept separately when the
remaining double sum is returned to the original square.

We next compute both double sums in \eqref{eq:double-shift}.
Since the summation region on its left is unchanged when \(i\) and \(j\)
are interchanged, taking the average of the two expressions gives
\begin{align}
 \sum_{i,j=0}^{m}\frac{(2i+1)^3a_i a_j}{i+j+1}
 &=\sum_{i,j=0}^{m}
 \frac{(2i+1)^3+(2j+1)^3}{2(i+j+1)}a_i a_j \notag\\
 &=\sum_{i,j=0}^{m}
 \bigl(1+2(i+j)+4(i^2-ij+j^2)\bigr)a_i a_j.
 \label{eq:symmetric-left}
\end{align}
Here we used the algebraic identity
\[
 (2i+1)^3+(2j+1)^3
 =2(i+j+1)\bigl(1+2(i+j)+4(i^2-ij+j^2)\bigr).
\]
The definitions \eqref{eq:notation} imply
\begin{align*}
 \sum_{i,j=0}^{m}a_i a_j&=A^2,\\
 \sum_{i,j=0}^{m}(i+j)a_i a_j&=2AB,\\
 \sum_{i,j=0}^{m}ij\,a_i a_j&=B^2,\\
 \sum_{i,j=0}^{m}(i^2+j^2)a_i a_j
     &=2A\sum_{k=0}^{m}k^2a_k.
\end{align*}
For example, the contribution of \(i\) factors as
\((\sum_{i=0}^{m}ia_i)(\sum_{j=0}^{m}a_j)=BA\);
the contribution of \(j\) is \(AB\). The other evaluations follow by
the same separation of the two indices.
Substituting these expressions into \eqref{eq:symmetric-left}, we obtain
\begin{equation}\label{eq:left-evaluated}
 \sum_{i,j=0}^{m}\frac{(2i+1)^3a_i a_j}{i+j+1}
 =A^2+4AB+8A\sum_{k=0}^{m}k^2a_k-4B^2.
\end{equation}

For the internal sum on the right of \eqref{eq:double-shift}, the same
symmetry gives
\begin{align}
 2\sum_{i=1}^{m}\sum_{j=0}^{m}\frac{i^3a_i a_j}{i+j}
 &=\sum_{\substack{0\le i,j\le m\\i+j>0}}
       \frac{i^3+j^3}{i+j}a_i a_j \notag\\
 &=\sum_{i,j=0}^{m}(i^2-ij+j^2)a_i a_j \notag\\
 &=2A\sum_{k=0}^{m}k^2a_k-B^2.
 \label{eq:right-evaluated}
\end{align}
To justify the first equality, one can first add the terms with \(i=0\)
and \(j>0\), whose numerators vanish. The region is then the square
with \((0,0)\) removed, so interchanging the indices and adding the two
expressions replaces \(2i^3\) by \(i^3+j^3\).
Indeed, \(i^3+j^3=(i+j)(i^2-ij+j^2)\). In the second line the
\((i,j)=(0,0)\) term can be restored because the polynomial
\(i^2-ij+j^2\) vanishes there.

Combining \eqref{eq:double-shift}, \eqref{eq:left-evaluated}, and
\eqref{eq:right-evaluated}, and collecting terms, yields
\begin{equation}\label{eq:mixed-moment}
 A^2+4AB+6A\sum_{k=0}^{m}k^2a_k-3B^2
 =p^3a_m\sum_{k=0}^{m}\frac{a_k}{m+k+1}.
\end{equation}
On the other hand, multiplying \eqref{eq:linear} by \(A\) and subtracting
\(A^2+3B^2\) gives
\begin{align*}
 AT-A^2-3B^2
 &=A^2+4AB+6A\sum_{k=0}^{m}k^2a_k-3B^2
     -\frac{p^3a_mA}{m+1}\\
 &=p^3a_m
   \left(\sum_{k=0}^{m}\frac{a_k}{m+k+1}-\frac{A}{m+1}\right).
\end{align*}
Expanding \(A=\sum_{k=0}^{m}a_k\) in the last term proves
\eqref{eq:bilinear}.
\end{proof}

\medskip
\noindent\textbf{Known reductions of Sun.}
Taking Sun's denominator parameter to be \(1\) in
\cite[Corollary~4.1]{SunQuadratic2022}, we obtain
\begin{equation}\label{eq:quadratic}
 \sum_{k=0}^{m}\frac{a_k}{(k+1)^2}
 \equiv-4A+6B+6T\pmod{p^3}
\end{equation}
and
\begin{equation}\label{eq:cubic}
 \sum_{k=0}^{m}\frac{a_k}{(k+1)^3}
 \equiv2-18A+36B+24T\pmod{p^3}.
\end{equation}

\subsection{Arithmetic evaluations and proofs}\label{subsec:arithmetic}

Since \(1\le k+1<p\) for \(0\le k\le m\), the denominators
in \(A,B,T\) and in the Catalan sum are coprime to \(p\).

\begin{proposition}\label{prop:p-adic-bilinear}
The sums \(A,B,T\) satisfy
\begin{equation}\label{eq:p-adic-bilinear}
 AT-A^2-3B^2\equiv p^2\pmod{p^3}.
\end{equation}
\end{proposition}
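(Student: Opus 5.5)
The plan is to reduce the exact identity of Lemma~\ref{lem:bilinear} modulo \(p^3\), keeping track of the single denominator divisible by \(p\). The left side of \eqref{eq:bilinear} is exactly \(AT-A^2-3B^2\). All of \(A,B,T\) lie in \(\mathbb Z_{(p)}\), since each \(a_k=\binom{2k}{k}^3/16^k\) does and the denominators \(k+1\le m+1<p\) are units. So it suffices to show that the right side of \eqref{eq:bilinear} is congruent to \(p^2\) modulo \(p^3\).

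\emph{Isolating the term with a \(p\) in the denominator.} On the right side of \eqref{eq:bilinear}, the factor \(1/(m+1)\) is a \(p\)-adic unit. The denominators \(m+k+1\) run over \(m+1,\dots,2m+1=p\). For \(0\le k\le m-1\) we have \(m+1\le m+k+1\le 2m<p\), so these are units. I would split off the term \(k=m\), where \(m+k+1=p\). Every other term is then \(p^3\) times an element of \(\mathbb Z_{(p)}\), hence \(\equiv0\pmod{p^3}\). The same holds for the \(-1/(m+1)\) part of the \(k=m\) term. What remains is
\[
 p^3a_m\cdot\frac{a_m}{p}=p^2a_m^2 .
\]
Hence the right side of \eqref{eq:bilinear} is congruent to \(p^2a_m^2\) modulo \(p^3\), and we need only \(a_m^2\equiv1\pmod p\).

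\emph{Evaluating \(a_m\) modulo \(p\).} Since \(2m=p-1\), we have
\[
 \binom{2m}{m}=\binom{p-1}{m}=\prod_{j=1}^{m}\frac{p-j}{j}\equiv(-1)^m\pmod p .
\]
Therefore \(a_m^2\equiv 16^{-2m}=16^{-(p-1)}\equiv1\pmod p\) by Fermat's little theorem, using \(p\) odd. Multiplying by \(p^2\) gives \(p^2a_m^2\equiv p^2\pmod{p^3}\), which yields \eqref{eq:p-adic-bilinear}.

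There is no serious obstacle. The only point requiring care is to perform the cancellation of the single factor \(p\) in \(1/(m+m+1)=1/p\) exactly, before reducing. This is why the identity in Lemma~\ref{lem:bilinear} is kept over \(\mathbb Q\). After that cancellation, every remaining quantity is checked to be in \(\mathbb Z_{(p)}\).
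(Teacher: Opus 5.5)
Your proposal is correct and follows essentially the same route as the paper. You split off the $k=m$ summand in Lemma~\ref{lem:bilinear}, whose denominator $2m+1=p$ cancels one factor of $p^3$ to leave $p^2a_m^2$; you discard all other contributions as multiples of $p^3$ in $\mathbb Z_{(p)}$; and you finish with $\binom{p-1}{m}\equiv(-1)^m\pmod p$ and Fermat's little theorem to obtain $a_m^2\equiv1\pmod p$.
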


\begin{proof}
Equation~\eqref{eq:boundary} gives
\[
 a_{m+1}=\frac{p^3a_m}{2(m+1)^3}\equiv0\pmod{p^3}.
\]
In \eqref{eq:bilinear}, the factor outside the sum is \(p^3a_m\).
Separating the summand with \(k=m\) gives the exact equality
\[
 AT-A^2-3B^2
 =p^3a_m\sum_{k=0}^{m-1}\frac{a_k}{m+k+1}
   +p^2a_m^2-\frac{p^3a_mA}{m+1}.
\]
For \(0\le k<m\), we have \(m+1\le m+k+1\le p-1\).
Thus the first and third terms on the right are multiples of \(p^3\)
in \(\mathbb Z_{(p)}\). The middle term comes from cancelling the
denominator \(2m+1=p\); it must be retained. Consequently,
\begin{align}
 AT-A^2-3B^2
 &\equiv p^2a_m^2\pmod{p^3}.
 \label{eq:resonant-boundary}
\end{align}
Finally,
\[
 \binom{p-1}{m}
 =\prod_{r=1}^{m}\frac{p-r}{r}\equiv(-1)^m\pmod p,
 \qquad 16^{p-1}\equiv1\pmod p.
\]
It follows that
\[
 a_m^2=\frac{\binom{p-1}{m}^{6}}{16^{p-1}}\equiv1\pmod p.
\]
Substitution into \eqref{eq:resonant-boundary} proves the proposition.
\end{proof}

We recall explicitly the two arithmetic results used below.
Our fixed prime \(p=x^2+3y^2\equiv1\pmod3\) satisfies \(p\ge7\) and
\(p\nmid xy\). To see this, if \(p\mid x\), the equality
\(x^2+3y^2=p\) implies \(p\mid3y^2\), and hence \(p\mid y\), since
\(p\ne3\). The left side would then be divisible by \(p^2\), a
contradiction. If \(p\mid y\), the same equality first gives
\(p\mid x\), leading to the same contradiction. Thus all fractions
with denominator a product of \(2,3,x,y\) lie in \(\mathbb Z_{(p)}\).

\medskip
\noindent\textbf{Known result of Sun and Ye.}
For a prime \(p\equiv1\pmod3\), written as \(p=x^2+3y^2\),
\cite[Theorem~1.2, first congruence in Eq.~(1.8)]{SunYe} states
\begin{equation}\label{eq:known-A}
 A\equiv4x^2-2p-\frac{p^2}{4x^2}\pmod{p^3}.
\end{equation}
\medskip
\noindent\textbf{Known result of Mao and Zhang.}
Reducing the congruence in
\cite[Theorem~1.2, Eq.~(1.3)]{MaoZhang} modulo \(p^3\) gives
\begin{equation}\label{eq:known-weighted}
 A+3B=\sum_{k=0}^{m}(3k+1)
       \frac{\binom{2k}{k}^3}{16^k}
 \equiv p\pmod{p^3}.
\end{equation}

\begin{proof}[Proof of Theorem~\ref{thm:aux}]
We have \(p\nmid x\), so \eqref{eq:known-A} implies
\(A\equiv4x^2\not\equiv0\pmod p\). Hence \(A\) is invertible modulo
\(p^3\). By Proposition~\ref{prop:p-adic-bilinear},
\[
 T\equiv A+\frac{3B^2+p^2}{A}\pmod{p^3}.
\]
From \eqref{eq:known-weighted}, \(B\equiv(p-A)/3\pmod{p^3}\).
Consequently,
\begin{align}
 T
 &\equiv A+\frac{(p-A)^2}{3A}+\frac{p^2}{A} \notag\\
 &=A+\frac{A^2-2pA+4p^2}{3A}\notag\\
 &=A+\frac{A}{3}-\frac{2p}{3}+\frac{4p^2}{3A}\notag\\
 &=\frac43A-\frac23p+\frac{4p^2}{3A}\pmod{p^3}.
 \label{eq:T-from-A}
\end{align}
The last term has a factor \(p^2\); using \(A\equiv4x^2\pmod p\) there,
we obtain
\[
 \frac{4p^2}{3A}\equiv\frac{p^2}{3x^2}\pmod{p^3}.
\]
This replacement needs only precision modulo \(p\), because
\(A^{-1}-(4x^2)^{-1}=(4x^2-A)/(4x^2A)\) is a multiple of \(p\)
in \(\mathbb Z_{(p)}\). Multiplication by \(p^2\) makes the resulting
error a multiple of \(p^3\).
Substitute \eqref{eq:known-A} into \eqref{eq:T-from-A}:
\begin{align*}
 T
 &\equiv\frac43\left(4x^2-2p-\frac{p^2}{4x^2}\right)
       -\frac23p+\frac{p^2}{3x^2}\\
 &=\frac{16}{3}x^2-\frac{10}{3}p\\
 &=2p-16y^2\pmod{p^3}.
\end{align*}
The two terms involving \(p^2/x^2\) cancel, and the last equality uses
\(x^2=p-3y^2\). This proves \eqref{eq:aux}.
\end{proof}

\begin{proof}[Proof of Theorem~\ref{thm:quadratic}]
Equation~\eqref{eq:known-weighted} gives
\(6B\equiv2p-2A\pmod{p^3}\). Substituting this into
\eqref{eq:quadratic}, we obtain
\begin{align}
 \sum_{k=0}^{m}\frac{a_k}{(k+1)^2}
 &\equiv-4A+6B+6T\notag\\
 &\equiv-6A+2p+6T\pmod{p^3}.
 \label{eq:quadratic-reduction}
\end{align}
Substitute \eqref{eq:known-A} and the evaluation of \(T\) in
Theorem~\ref{thm:aux}:
\begin{align*}
 \sum_{k=0}^{m}\frac{a_k}{(k+1)^2}
 &\equiv-6\left(4x^2-2p-\frac{p^2}{4x^2}\right)+2p\\
 &\qquad+6(2p-16y^2)\\
 &=-24x^2+26p-96y^2+\frac{3p^2}{2x^2}\\
 &=2p-24y^2+\frac{3p^2}{2x^2}\\
 &\equiv2p-24y^2-\frac{p^2}{2y^2}\pmod{p^3}.
\end{align*}
The penultimate equality uses \(x^2=p-3y^2\).
For the final congruence, the exact identity
\[
 \frac{3p^2}{2x^2}+\frac{p^2}{2y^2}
 =\frac{p^2(3y^2+x^2)}{2x^2y^2}
 =\frac{p^3}{2x^2y^2}
\]
shows that \(3p^2/(2x^2)\equiv-p^2/(2y^2)\pmod{p^3}\), since
\(p\nmid2x^2y^2\).
This proves \eqref{eq:quadratic-mod-p3}.
\end{proof}

\begin{proof}[Proof of Theorem~\ref{thm:main}]
Since \(C_k^3/16^k=a_k/(k+1)^3\), \eqref{eq:cubic} yields
\begin{align}
 \sum_{k=0}^{m}\frac{C_k^3}{16^k}
 &\equiv2-18A+36B+24T \notag\\
 &\equiv2-30A+12p+24T\pmod{p^3},
 \label{eq:main-reduction}
\end{align}
where the second line uses
\(36B\equiv12(p-A)=12p-12A\pmod{p^3}\), by
\eqref{eq:known-weighted}.

Apply \eqref{eq:known-A} and Theorem~\ref{thm:aux} to
\eqref{eq:main-reduction}:
\begin{align*}
 \sum_{k=0}^{m}\frac{C_k^3}{16^k}
 &\equiv2-30\left(4x^2-2p-\frac{p^2}{4x^2}\right)
          +12p+24(2p-16y^2)\\
 &=2-120x^2-384y^2+120p+\frac{15p^2}{2x^2}\\
 &=2-24y^2+\frac{15p^2}{2x^2}\pmod{p^3}.
\end{align*}
Here \(-120x^2+120p=360y^2\).
For the remaining fraction, the exact identity
\[
 \frac{15p^2}{2x^2}+\frac{5p^2}{2y^2}
 =\frac{5p^2(x^2+3y^2)}{2x^2y^2}
 =\frac{5p^3}{2x^2y^2}
\]
shows that \(\frac{15p^2}{2x^2}\equiv-\frac{5p^2}{2y^2}\pmod{p^3}\),
since \(p\nmid2x^2y^2\). This proves \eqref{eq:main}.
\end{proof}

\section{Two weighted Catalan supercongruences}\label{sec:weighted}

The evaluations with quadratic and cubic denominators established in
Section~\ref{sec:main-proof} give two direct consequences. In this
section, the sequence \(a_k\) is still the one in \eqref{eq:ak} and
\(m=(p-1)/2\).

\begin{corollary}\label{cor:weight-one}
Let \(p\equiv1\pmod3\) be a prime, and write \(p=x^2+3y^2\). Then
\begin{equation}\label{eq:weight-one}
 \sum_{k=0}^{(p-1)/2}\frac{kC_k^3}{16^k}
 \equiv2p-2+\frac{2p^2}{y^2}\pmod{p^3}.
\end{equation}
\end{corollary}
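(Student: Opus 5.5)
Write \(k=(k+1)-1\). Then
\[
 \frac{kC_k^3}{16^k}=\frac{a_k}{(k+1)^2}-\frac{a_k}{(k+1)^3},
\]
so
\[
 \sum_{k=0}^{m}\frac{kC_k^3}{16^k}
 =\sum_{k=0}^{m}\frac{a_k}{(k+1)^2}-\sum_{k=0}^{m}\frac{C_k^3}{16^k}.
\]
This is an exact identity over \(\mathbb Q\). All denominators \(k+1\le m+1<p\) are units in \(\mathbb Z_{(p)}\), so reduction modulo \(p^3\) is legitimate.

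Substitute Theorem~\ref{thm:quadratic} for the first sum and Theorem~\ref{thm:main} for the second. This gives
\[
 \Bigl(2p-24y^2-\frac{p^2}{2y^2}\Bigr)-\Bigl(2-24y^2-\frac{5p^2}{2y^2}\Bigr)
 =2p-2+\frac{2p^2}{y^2}\pmod{p^3}.
\]
The \(y^2\) terms cancel, and the \(p^2\) coefficients combine as \(-\tfrac12+\tfrac52=2\). The fraction \(p^2/y^2\) lies in \(\mathbb Z_{(p)}\) because \(p\nmid y\), which was established in Section~\ref{subsec:arithmetic}.

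There is no real obstacle. The only care needed is the bookkeeping of signs in the \(p^2/y^2\) coefficients, and confirming that both theorems are stated with the same normalization \(p=x^2+3y^2\). If a cross-check is wanted, one can redo the computation directly from the reductions \eqref{eq:quadratic} and \eqref{eq:cubic}. Their difference is \(-2+14A-30B-18T\), which \eqref{eq:known-weighted} reduces to \(-2+24A-10p-18T\). Substituting \eqref{eq:known-A} and Theorem~\ref{thm:aux} then gives \(2p-2-6p^2/x^2\), and by the exact identity \(p^2/x^2+p^2/(3y^2)=p^3/(3x^2y^2)\) this equals \(2p-2+2p^2/y^2\) modulo \(p^3\), as required.
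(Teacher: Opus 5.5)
Your proposal is correct and matches the paper's proof: you use the same split \(k/(k+1)^3=1/(k+1)^2-1/(k+1)^3\) and then substitute Theorems~\ref{thm:quadratic} and~\ref{thm:main}. Your optional cross-check, via \eqref{eq:quadratic}, \eqref{eq:cubic}, \eqref{eq:known-weighted}, \eqref{eq:known-A} and Theorem~\ref{thm:aux}, is also arithmetically correct, though it is not needed.
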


\begin{proof}
Put \(m=(p-1)/2\). The elementary identity
\[
 \frac{k}{(k+1)^3}
 =\frac1{(k+1)^2}-\frac1{(k+1)^3}
\]
and \(C_k^3/16^k=a_k/(k+1)^3\) imply
\[
 \sum_{k=0}^{m}\frac{kC_k^3}{16^k}
 =\sum_{k=0}^{m}\frac{a_k}{(k+1)^2}
  -\sum_{k=0}^{m}\frac{C_k^3}{16^k}.
\]
Apply Theorem~\ref{thm:quadratic} and
Theorem~\ref{thm:main} to the two sums on the right:
\begin{align*}
 \sum_{k=0}^{m}\frac{kC_k^3}{16^k}
 &\equiv\left(2p-24y^2-\frac{p^2}{2y^2}\right)\\
 &\qquad-\left(2-24y^2-\frac{5p^2}{2y^2}\right)\\
 &=2p-2+\frac{2p^2}{y^2}\pmod{p^3}.
\end{align*}
\end{proof}

\begin{corollary}\label{cor:weight-two}
Let \(p\equiv1\pmod3\) be a prime, and write \(p=x^2+3y^2\). Then
\begin{equation}\label{eq:weight-two}
 \sum_{k=0}^{(p-1)/2}\frac{k^2C_k^3}{16^k}
 \equiv2-2p+8y^2-\frac{3p^2}{2y^2}\pmod{p^3}.
\end{equation}
\end{corollary}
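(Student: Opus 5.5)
The plan is to follow the proof of Corollary~\ref{cor:weight-one}: write the weight as a combination of powers of \(1/(k+1)\), then invoke the three evaluations already proved modulo \(p^3\). Since \(k=(k+1)-1\),
\[
 k^2=(k+1)^2-2(k+1)+1,
 \qquad\text{so}\qquad
 \frac{k^2}{(k+1)^3}=\frac1{k+1}-\frac2{(k+1)^2}+\frac1{(k+1)^3}.
\]
Together with \(C_k^3/16^k=a_k/(k+1)^3\), this gives the exact identity
\[
 \sum_{k=0}^{m}\frac{k^2C_k^3}{16^k}
 =T-2\sum_{k=0}^{m}\frac{a_k}{(k+1)^2}+\sum_{k=0}^{m}\frac{C_k^3}{16^k},
\]
where \(T\) is as in \eqref{eq:notation} and \(m=(p-1)/2\). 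All denominators \(k+1\) lie in \([1,p-1]\), so every term is in \(\mathbb Z_{(p)}\).

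Next I substitute the three evaluations:
\begin{itemize}
\item Theorem~\ref{thm:aux} gives \(T\equiv2p-16y^2\);
\item Theorem~\ref{thm:quadratic} gives \(2p-24y^2-p^2/(2y^2)\) for the quadratic sum;
\item Theorem~\ref{thm:main} gives \(2-24y^2-5p^2/(2y^2)\) for the cubic sum.
\end{itemize}
Collecting terms gives
\[
 (2p-16y^2)-2\Bigl(2p-24y^2-\frac{p^2}{2y^2}\Bigr)+\Bigl(2-24y^2-\frac{5p^2}{2y^2}\Bigr)
 =2-2p+8y^2-\frac{3p^2}{2y^2},
\]
which is \eqref{eq:weight-two}.

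There is no real obstacle here. The only points to check are the partial-fraction identity and the bookkeeping of the \(p^2/y^2\) terms: \(+p^2/y^2-5p^2/(2y^2)=-3p^2/(2y^2)\). The fraction \(p^2/y^2\) is admissible because \(p\nmid y\), as established in Section~\ref{subsec:arithmetic}.
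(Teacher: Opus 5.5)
Your proposal is correct and is essentially identical to the paper's proof. It uses the same exact decomposition \(k^2/(k+1)^3=1/(k+1)-2/(k+1)^2+1/(k+1)^3\), substitutes Theorems~\ref{thm:aux}, \ref{thm:quadratic}, and~\ref{thm:main}, and collects the \(p^2/y^2\) terms the same way.
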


\begin{proof}
Expanding \(k^2=((k+1)-1)^2\) and dividing by \((k+1)^3\) gives
\[
 \frac{k^2}{(k+1)^3}
 =\frac1{k+1}-\frac2{(k+1)^2}+\frac1{(k+1)^3}.
\]
Multiplying by \(a_k\), summing over \(0\le k\le m\), and using
\(C_k^3/16^k=a_k/(k+1)^3\), we obtain the exact equality
\[
 \sum_{k=0}^m\frac{k^2C_k^3}{16^k}
 =T-2\sum_{k=0}^m\frac{a_k}{(k+1)^2}
     +\sum_{k=0}^m\frac{C_k^3}{16^k}.
\]
Theorems~\ref{thm:aux}, \ref{thm:quadratic}, and \ref{thm:main}
therefore give
\begin{align*}
 \sum_{k=0}^m\frac{k^2C_k^3}{16^k}
 &\equiv (2p-16y^2)
       -2\left(2p-24y^2-\frac{p^2}{2y^2}\right)\\
 &\qquad+\left(2-24y^2-\frac{5p^2}{2y^2}\right)\\
 &=2+(2p-4p)+(-16+48-24)y^2
            +\left(1-\frac52\right)\frac{p^2}{y^2}\\
 &=2-2p+8y^2-\frac{3p^2}{2y^2}\pmod{p^3}.
\end{align*}
\end{proof}

\section{The negative denominator}\label{sec:minus-eight-proof}

Throughout this section, fix a prime \(p\equiv1\pmod4\), write
\(p=x^2+4y^2\) with \(x,y\in\mathbb Z\), and put
\[
 m=\frac{p-1}{2}.
\]
Thus \(2m+1=p\). Define
\begin{equation}\label{eq:bk}
 b_k=\frac{\binom{2k}{k}^3}{(-8)^k}
 \qquad(k=0,1,2,\ldots),
\end{equation}
and put
\begin{equation}\label{eq:negative-notation}
 F=\sum_{k=0}^m b_k,\qquad
 H=\sum_{k=0}^m kb_k,\qquad
 J=\sum_{k=0}^m\frac{b_k}{k+1}.
\end{equation}
The next term \(b_{m+1}\) is defined by \eqref{eq:bk} and occurs
as a boundary term in the index shifts below.
Directly from the definition of \(C_k\),
\begin{equation}\label{eq:negative-catalan-sum}
 \sum_{k=0}^m\frac{C_k^3}{(-8)^k}
 =\sum_{k=0}^m\frac{b_k}{(k+1)^3}.
\end{equation}
\subsection{The recurrence and the first finite identity}

The quotient of two consecutive terms in \eqref{eq:bk} is
\begin{align}
 \frac{b_{k+1}}{b_k}
 &=\frac1{-8}
   \left(\frac{(2k+2)(2k+1)}{(k+1)^2}\right)^3\notag\\
 &=-\frac1{8}\left(\frac{2(2k+1)}{k+1}\right)^3
 =-\frac{(2k+1)^3}{(k+1)^3}.
 \label{eq:negative-recurrence}
\end{align}
Equivalently,
\begin{equation}\label{eq:negative-recurrence-product}
 (2k+1)^3b_k=-(k+1)^3b_{k+1}.
\end{equation}
At the upper limit this gives
\begin{equation}\label{eq:negative-boundary}
 (m+1)^3b_{m+1}=-p^3b_m.
\end{equation}

\begin{lemma}\label{lem:negative-linear}
We have
\begin{equation}\label{eq:negative-linear}
 J=2F+4H+9\sum_{k=0}^m k^2b_k
       -\frac{p^3b_m}{m+1}.
\end{equation}
\end{lemma}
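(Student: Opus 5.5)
The plan is to copy the proof of Lemma~\ref{lem:linear}: find a polynomial identity that, combined with the recurrence \eqref{eq:negative-recurrence-product}, writes \(b_k/(k+1)\) as a polynomial combination of \(b_k\) plus a shifted term involving \(b_{k+1}\). Then sum over \(0\le k\le m\) and evaluate the leftover endpoint term with \eqref{eq:negative-boundary}.

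\emph{Choosing the identity.} I look for a quadratic \(P(k)\) and a constant \(c\) with
\[
 P(k)\,b_k+c\,(k+1)^2b_{k+1}=\frac{b_k}{k+1}.
\]
By \eqref{eq:negative-recurrence}, \((k+1)^2b_{k+1}=-\frac{(2k+1)^3}{k+1}\,b_k\). So the requirement becomes
\[
 P(k)(k+1)-c(2k+1)^3=1.
\]
Evaluating at \(k=-1\) forces \(c=1\). Then \(P(k)(k+1)=1+(2k+1)^3=8k^3+12k^2+6k+2\). Direct multiplication shows this factors as \((k+1)(8k^2+4k+2)\), so \(P(k)=2+4k+8k^2\). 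This is the same polynomial as in Lemma~\ref{lem:linear}. Only the sign and the factor in front of the shifted term change, because the ratio now carries \(-1\) instead of \(1/2\). We obtain
\[
 (2+4k+8k^2)b_k+(k+1)^2b_{k+1}=\frac{b_k}{k+1}.
\]

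\emph{Summing and shifting.} Sum this identity over \(0\le k\le m\). This gives
\[
 J=2F+4H+8\sum_{k=0}^m k^2b_k+\sum_{k=1}^{m+1}k^2b_k.
\]
Reindexing the last sum as \(\sum_{k=0}^m k^2b_k+(m+1)^2b_{m+1}\) (the \(k=0\) term is zero) yields
\[
 J=2F+4H+9\sum_{k=0}^m k^2b_k+(m+1)^2b_{m+1}.
\]
Finally, \eqref{eq:negative-boundary} gives \((m+1)^2b_{m+1}=-p^3b_m/(m+1)\), and substituting this proves \eqref{eq:negative-linear}.

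No step is a real obstacle. The only point requiring care is the sign bookkeeping. The negative ratio makes the shifted term enter with a plus sign, so the coefficient of \(\sum k^2b_k\) becomes \(8+1=9\) rather than \(8-2=6\). The same sign then produces the minus sign in front of the endpoint term \(p^3b_m/(m+1)\), which must be kept exactly over \(\mathbb Q\).
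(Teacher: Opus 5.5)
Your proposal is correct and follows essentially the same route as the paper: the identity \((2+4k+8k^2)b_k+(k+1)^2b_{k+1}=b_k/(k+1)\) from the recurrence, summation over \(0\le k\le m\), reindexing of the shifted sum, and substitution of \eqref{eq:negative-boundary}. The only difference is that you derive the polynomial identity by undetermined coefficients, whereas the paper simply reuses the one from Lemma~\ref{lem:linear}.
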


\begin{proof}
The polynomial identity used in Lemma~\ref{lem:linear} is
\[
 (2+4k+8k^2)(k+1)-(2k+1)^3=1.
\]
Multiply by \(b_k/(k+1)\), and then apply
\eqref{eq:negative-recurrence-product}. This gives
\begin{align}
 \frac{b_k}{k+1}
 &=(2+4k+8k^2)b_k-\frac{(2k+1)^3b_k}{k+1}\notag\\
 &=(2+4k+8k^2)b_k+(k+1)^2b_{k+1}.
 \label{eq:negative-linear-term}
\end{align}
Summing over \(0\le k\le m\),
\[
 J=2F+4H+8\sum_{k=0}^m k^2b_k
                 +\sum_{k=0}^m(k+1)^2b_{k+1}.
\]
For the last sum, the change of variable \(r=k+1\) gives
\[
 \sum_{k=0}^m(k+1)^2b_{k+1}
 =\sum_{r=1}^{m+1}r^2b_r
 =\sum_{k=0}^m k^2b_k+(m+1)^2b_{m+1}.
\]
The \(k=0\) term in the middle quadratic sum is zero.
Consequently,
\[
 J=2F+4H+9\sum_{k=0}^m k^2b_k+(m+1)^2b_{m+1}.
\]
Finally, \eqref{eq:negative-boundary} implies
\((m+1)^2b_{m+1}=-p^3b_m/(m+1)\), proving
\eqref{eq:negative-linear}.
\end{proof}

\subsection{A bilinear identity with an explicit endpoint}

\begin{lemma}\label{lem:negative-bilinear}
We have
\begin{equation}\label{eq:negative-bilinear}
 \begin{split}
 FJ-F^2-\frac92H^2
  &=p^3b_m\left(
       \sum_{j=0}^m\frac{b_j}{m+j+1}-\frac{F}{m+1}\right).
 \end{split}
\end{equation}
\end{lemma}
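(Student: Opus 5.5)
The plan is to repeat the double-sum argument of Lemma~\ref{lem:bilinear}, with \eqref{eq:negative-recurrence-product} in place of \eqref{eq:recurrence}, and then to combine the result with Lemma~\ref{lem:negative-linear}. Write \(Q=\sum_{k=0}^m k^2b_k\) and consider
\[
 S=\sum_{i,j=0}^{m}\frac{(2i+1)^3b_ib_j}{i+j+1}.
\]

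First I would evaluate \(S\) by shifting the index. By \eqref{eq:negative-recurrence-product} the numerator \((2i+1)^3b_i\) equals \(-(i+1)^3b_{i+1}\). Substituting \(i\mapsto i-1\) moves the range to \(1\le i\le m+1\) and gives
\[
 S=-\sum_{i=1}^{m}\sum_{j=0}^{m}\frac{i^3b_ib_j}{i+j}
   -(m+1)^3b_{m+1}\sum_{j=0}^m\frac{b_j}{m+j+1}.
\]
By \eqref{eq:negative-boundary}, the row \(i=m+1\) equals \(+p^3b_m\sum_{j}b_j/(m+j+1)\). This row is kept exactly over \(\mathbb Q\).

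Second, I would evaluate \(S\) directly by symmetrization. Averaging over the swap \(i\leftrightarrow j\) and using
\[
 (2i+1)^3+(2j+1)^3=2(i+j+1)\bigl(1+2(i+j)+4(i^2-ij+j^2)\bigr)
\]
gives, after separating the two indices as in \eqref{eq:left-evaluated},
\[
 S=F^2+4FH+8FQ-4H^2.
\]

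Third, I would treat the inner double sum on the right in the same way as \eqref{eq:right-evaluated}. The \(i=0\) terms vanish, so I add them and symmetrize over the square minus \((0,0)\). Then I use \(i^3+j^3=(i+j)(i^2-ij+j^2)\) and restore the \((0,0)\) term, where \(i^2-ij+j^2=0\). This yields
\[
 \sum_{i=1}^{m}\sum_{j=0}^{m}\frac{i^3b_ib_j}{i+j}=FQ-\tfrac12H^2.
\]
Note that this inner sum now enters with coefficient \(-1\), not \(2\), which is the sign change caused by the factor \(-1/8\) in the recurrence. Equating the two evaluations of \(S\) and moving this sum to the left then gives
\[
 F^2+4FH+9FQ-\tfrac92H^2=p^3b_m\sum_{j=0}^m\frac{b_j}{m+j+1}.
\]

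Finally, I would multiply \eqref{eq:negative-linear} by \(F\), which gives
\[
 FJ=2F^2+4FH+9FQ-\frac{p^3b_mF}{m+1}.
\]
Subtracting \(F^2+\tfrac92H^2\) and substituting the previous display eliminates \(Q\) and yields \eqref{eq:negative-bilinear}. The coefficient \(9\) of \(Q\) matches in both expressions, which is exactly why \(\tfrac92H^2\) is the right correction.

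The main obstacle is bookkeeping the signs and coefficients. The negative recurrence changes the weight of the inner double sum from \(2\) to \(-1\), which turns \(-3B^2\) into \(-\tfrac92H^2\). The sign of the boundary row from \eqref{eq:negative-boundary} must also be tracked. I would cross-check the result by verifying the identity numerically for \(m=1\) (\(p=3\)).
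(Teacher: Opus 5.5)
Your proposal is correct and follows the paper's proof essentially step by step. It uses the same index shift with the endpoint row \(i=m+1\) converted via \eqref{eq:negative-boundary}, the same symmetrization giving \(F^2+4FH+8FQ-4H^2\), the same resulting identity \(F^2+4FH+9FQ-\tfrac92H^2=p^3b_m\sum_j b_j/(m+j+1)\), and the same final combination with Lemma~\ref{lem:negative-linear}. The one cosmetic difference is in the inner sum: the paper splits off the column \(j=0\) (using \(b_0=1\)) and symmetrizes over \([1,m]^2\), whereas you adjoin the vanishing \(i=0\) terms and symmetrize over the square minus the origin, as in \eqref{eq:right-evaluated}; both give \(FQ-\tfrac12H^2\).
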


\begin{proof}
Consider the same double sum in two ways. First,
\eqref{eq:negative-recurrence-product} gives
\begin{align}
 &\sum_{i=0}^m\sum_{j=0}^m
        \frac{(2i+1)^3b_i b_j}{i+j+1}\notag\\
 &\qquad=-\sum_{i=0}^m\sum_{j=0}^m
        \frac{(i+1)^3b_{i+1}b_j}{i+j+1}\notag\\
 &\qquad=-\sum_{r=1}^{m+1}\sum_{j=0}^m
        \frac{r^3b_r b_j}{r+j}\notag\\
 &\qquad=-\sum_{i=1}^m\sum_{j=0}^m
        \frac{i^3b_i b_j}{i+j}
       -(m+1)^3b_{m+1}\sum_{j=0}^m\frac{b_j}{m+j+1}\notag\\
 &\qquad=-\sum_{i=1}^m\sum_{j=0}^m
        \frac{i^3b_i b_j}{i+j}
       +p^3b_m\sum_{j=0}^m\frac{b_j}{m+j+1}.
 \label{eq:negative-double-shift}
\end{align}
In the second equality we set \(r=i+1\), so that
\(i+j+1=r+j\), and the interval \(0\le i\le m\) becomes
\(1\le r\le m+1\). We then separate the row \(r=m+1\)
and rename the remaining dummy index \(r\) as \(i\).
The last equality uses \eqref{eq:negative-boundary}; its minus sign
changes the endpoint contribution to a positive term.

We next evaluate the left side of \eqref{eq:negative-double-shift}.
Its summation region is unchanged by interchanging \(i\) and \(j\),
so averaging the two expressions yields
\begin{align}
 \sum_{i,j=0}^m\frac{(2i+1)^3b_i b_j}{i+j+1}
 &=\sum_{i,j=0}^m
   \frac{(2i+1)^3+(2j+1)^3}{2(i+j+1)}b_i b_j\notag\\
 &=\sum_{i,j=0}^m
   \bigl(1+2(i+j)+4(i^2-ij+j^2)\bigr)b_i b_j.
 \label{eq:negative-symmetric-left}
\end{align}
The cubic factorization is the one used in
\eqref{eq:symmetric-left}. Separating the two indices gives
\begin{align*}
 \sum_{i,j=0}^m b_i b_j&=F^2,\\
 \sum_{i,j=0}^m(i+j)b_i b_j&=2FH,\\
 \sum_{i,j=0}^m ij\,b_i b_j&=H^2,\\
 \sum_{i,j=0}^m(i^2+j^2)b_i b_j
     &=2F\sum_{k=0}^m k^2b_k.
\end{align*}
Thus
\begin{equation}\label{eq:negative-left-evaluated}
 \sum_{i,j=0}^m\frac{(2i+1)^3b_i b_j}{i+j+1}
 =F^2+4FH+8F\sum_{k=0}^m k^2b_k-4H^2.
\end{equation}

To evaluate the internal sum on the right of
\eqref{eq:negative-double-shift}, separate the terms with \(j=0\).
Since \(b_0=1\),
\begin{align}
 \sum_{i=1}^m\sum_{j=0}^m\frac{i^3b_i b_j}{i+j}
 &=\sum_{i=1}^m i^2b_i
   +\sum_{i=1}^m\sum_{j=1}^m\frac{i^3b_i b_j}{i+j}.
 \label{eq:negative-zero-row}
\end{align}
The two indices in the last sum both run from \(1\) to \(m\).
We may therefore interchange them and take the average:
\begin{align*}
 \sum_{i,j=1}^m\frac{i^3b_i b_j}{i+j}
 &=\frac12\sum_{i,j=1}^m\frac{i^3+j^3}{i+j}b_i b_j\\
 &=\frac12\sum_{i,j=1}^m(i^2-ij+j^2)b_i b_j\\
 &=\left(\sum_{i=1}^m i^2b_i\right)
      \left(\sum_{j=1}^m b_j\right)
      -\frac12\left(\sum_{i=1}^m ib_i\right)^2\\
 &=(F-1)\sum_{k=0}^m k^2b_k-\frac12H^2.
\end{align*}
Here we used \(i^3+j^3=(i+j)(i^2-ij+j^2)\),
\(\sum_{j=1}^m b_j=F-b_0=F-1\), and the fact that the terms
with \(k=0\) in the first and second moments vanish.
Adding back the first term in \eqref{eq:negative-zero-row} gives
\begin{equation}\label{eq:negative-right-evaluated}
 \sum_{i=1}^m\sum_{j=0}^m\frac{i^3b_i b_j}{i+j}
 =F\sum_{k=0}^m k^2b_k-\frac12H^2.
\end{equation}
Every explicit denominator \(i+j\) above is positive. In particular,
no fraction with denominator \(i+j=0\) has been introduced.

Substitution of \eqref{eq:negative-left-evaluated} and
\eqref{eq:negative-right-evaluated} into
\eqref{eq:negative-double-shift} gives
\begin{align*}
 F^2+4FH+8F\sum_{k=0}^m k^2b_k-4H^2
 &=-F\sum_{k=0}^m k^2b_k+\frac12H^2\\
 &\quad+p^3b_m\sum_{j=0}^m\frac{b_j}{m+j+1}.
\end{align*}
Moving the quadratic moment and the term \(H^2/2\) to the left,
we obtain
\begin{equation}\label{eq:negative-mixed-moment}
 F^2+4FH+9F\sum_{k=0}^m k^2b_k-\frac92H^2
 =p^3b_m\sum_{j=0}^m\frac{b_j}{m+j+1}.
\end{equation}
On the other hand, Lemma~\ref{lem:negative-linear} implies
\begin{align*}
 FJ-F^2-\frac92H^2
 &=F^2+4FH+9F\sum_{k=0}^m k^2b_k-\frac92H^2\\
 &\quad-\frac{p^3b_mF}{m+1}.
\end{align*}
Replacing the first four terms on the right by
\eqref{eq:negative-mixed-moment} proves
\eqref{eq:negative-bilinear}.
\end{proof}

\begin{proposition}\label{prop:negative-bilinear-mod}
The sums \(F,H,J\) satisfy
\begin{equation}\label{eq:negative-bilinear-mod}
 FJ-F^2-\frac92H^2\equiv p^2\pmod{p^3}.
\end{equation}
\end{proposition}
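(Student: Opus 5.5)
The plan mirrors the proof of Proposition~\ref{prop:p-adic-bilinear}, now starting from the exact identity of Lemma~\ref{lem:negative-bilinear}. First record that \(b_{m+1}=-p^3b_m/(m+1)^3\) by \eqref{eq:negative-boundary}. Since \(m+1<p\), the factor \(b_m\) lies in \(\mathbb Z_{(p)}\), and so do all \(b_k\) for \(k\le m\) (the denominators are powers of \(2\)). Next, split off the summand \(j=m\) in \eqref{eq:negative-bilinear}. Because \(m+m+1=2m+1=p\), this summand equals \(b_m/p\). This yields the exact equality
\[
 FJ-F^2-\frac92H^2
 =p^3b_m\sum_{j=0}^{m-1}\frac{b_j}{m+j+1}
 +p^2b_m^2-\frac{p^3b_mF}{m+1}.
\]

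For \(0\le j<m\) we have \(m+1\le m+j+1\le p-1\), so each \(b_j/(m+j+1)\) lies in \(\mathbb Z_{(p)}\). Likewise \(F/(m+1)\in\mathbb Z_{(p)}\). Therefore the first and third terms are multiples of \(p^3\), and
\[
 FJ-F^2-\frac92H^2\equiv p^2b_m^2\pmod{p^3}.
\]
Note also that \(H^2/2\) lies in \(\mathbb Z_{(p)}\), since \(p\) is odd, so the left side is a genuine element of \(\mathbb Z_{(p)}\) and the congruence makes sense.

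It then remains to show \(b_m^2\equiv1\pmod p\). Write
\[
 b_m^2=\frac{\binom{2m}{m}^6}{64^{m}}=\frac{\binom{p-1}{m}^6}{8^{p-1}}.
\]
As before, \(\binom{p-1}{m}\equiv(-1)^m\pmod p\), and Fermat gives \(8^{p-1}\equiv1\pmod p\). Hence \(b_m^2\equiv1\pmod p\). Multiplying by \(p^2\) gives the claim.

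There is no real obstacle here. The only delicate point is keeping the \(j=m\) endpoint term exactly: the denominator \(2m+1=p\) there cancels one factor of \(p\), which is what produces the \(p^2\) rather than a term divisible by \(p^3\). Everything else is routine checking that the remaining denominators \(m+j+1\) and \(m+1\) are prime to \(p\).
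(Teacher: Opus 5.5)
Your proposal is correct and follows the paper's proof essentially step for step: isolate the \(j=m\) summand in Lemma~\ref{lem:negative-bilinear}, where the denominator \(2m+1=p\) produces \(p^2b_m^2\), discard the remaining terms as multiples of \(p^3\) because \(m+1\le m+j+1\le p-1\), and then get \(b_m^2\equiv1\pmod p\) from \(\binom{p-1}{m}\equiv(-1)^m\) and Fermat's theorem. Writing the denominator as \(8^{p-1}\) instead of \((-8)^{p-1}\), and noting that \(H^2/2\in\mathbb Z_{(p)}\), are only cosmetic differences.
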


\begin{proof}
Equation~\eqref{eq:negative-boundary} gives
\begin{equation}\label{eq:negative-endpoint-mod}
 b_{m+1}=-\frac{p^3b_m}{(m+1)^3}\equiv0\pmod{p^3}.
\end{equation}
Here \(m+1=(p+1)/2\) lies strictly between \(0\) and \(p\),
and \(b_m\in\mathbb Z_{(p)}\), so the displayed congruence is valid.

Apply Lemma~\ref{lem:negative-bilinear} and separate the term
\(j=m\). Its denominator is \(m+m+1=p\), so cancellation gives
\begin{align}
 FJ-F^2-\frac92H^2
 &=p^2b_m^2\notag\\
 &\quad+p^3b_m\left(
      \sum_{j=0}^{m-1}\frac{b_j}{m+j+1}-\frac{F}{m+1}\right).
 \label{eq:negative-endpoint-separated}
\end{align}
For \(0\le j\le m-1\),
\[
 m+1\le m+j+1\le2m=p-1.
\]
Every denominator in parentheses is therefore coprime to \(p\),
including the denominator \(m+1\) in the last term.
Since each \(b_j\) and \(F\) belongs to \(\mathbb Z_{(p)}\), the
whole parenthesis belongs to that ring. Its product with \(p^3b_m\)
vanishes modulo \(p^3\). Consequently,
\[
 FJ-F^2-\frac92H^2\equiv p^2b_m^2\pmod{p^3}.
\]
To determine \(b_m^2\) modulo \(p\), use
\[
 \binom{p-1}{m}
 =\prod_{r=1}^m\frac{p-r}{r}\equiv(-1)^m\pmod p
\]
and Fermat's little theorem \((-8)^{p-1}\equiv1\pmod p\).
As \(2m=p-1\), these give
\[
 b_m^2
 =\frac{\binom{p-1}{m}^6}{(-8)^{2m}}
 =\frac{\binom{p-1}{m}^6}{(-8)^{p-1}}
 \equiv\frac{(-1)^{6m}}1=1\pmod p.
\]
Thus \(p^2b_m^2\equiv p^2\pmod{p^3}\), which proves
\eqref{eq:negative-bilinear-mod}.
\end{proof}

\subsection{The cubic-denominator reduction}

Taking Sun's denominator parameter to be \(-\tfrac12\) in
\cite[Corollary~4.1]{SunQuadratic2022} gives
\begin{equation}\label{eq:negative-cubic}
 \sum_{k=0}^m\frac{b_k}{(k+1)^3}
 \equiv-1-15F+54H+24J\pmod{p^3}.
\end{equation}

\subsection{The arithmetic inputs}\label{subsec:minus-eight-inputs}

The fixed prime \(p=x^2+4y^2\equiv1\pmod4\) satisfies \(p\ge5\) and
\(p\nmid xy\). Indeed, if \(p\mid x\), reduction of
\(x^2+4y^2=p\) modulo \(p\) gives \(p\mid y\), because
\(p\ne2\); the defining equality would then have its left side
divisible by \(p^2\). If \(p\mid y\), the same reduction first
gives \(p\mid x\), leading to the same contradiction.
Consequently, the fractions below with denominators involving
\(2,3,x,y\) lie in \(\mathbb Z_{(p)}\).

\begin{proposition}\label{prop:negative-arithmetic}
Under these assumptions,
\begin{align}
 F&\equiv4x^2-2p-\frac{p^2}{4x^2}\pmod{p^3},
 \label{eq:known-F}\\
 F+3H&\equiv p\pmod{p^3}.
 \label{eq:known-negative-weighted}
\end{align}
\end{proposition}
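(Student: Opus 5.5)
This proposition is not derived from the finite identities; it is a restatement of two known arithmetic inputs, as in Section~\ref{subsec:arithmetic}. The plan is to quote them in the present notation and verify that the normalizations agree.

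For \eqref{eq:known-F}, I will invoke Sun and Ye \cite[Theorem~1.3]{SunYe}. For a prime \(p=x^2+4y^2\equiv1\pmod4\), that result evaluates
\[
 \sum_{k=0}^{(p-1)/2}\frac{\binom{2k}{k}^3}{(-8)^k}
\]
modulo \(p^3\) as \(4x^2-2p-p^2/(4x^2)\). The only check needed is that the summation range is exactly \(0\le k\le m\), so that the quoted sum is literally \(F\). If their statement uses the range \(0\le k\le p-1\) instead, I will show that the terms with \(m<k<p\) contribute nothing modulo \(p^3\). For such \(k\), \(\binom{2k}{k}\) is divisible by \(p\), since \(p\le 2k<2p\) and \(k<p\). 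Hence the cube \(\binom{2k}{k}^3\) is divisible by \(p^3\), and the denominators \((-8)^k\) are units. Since \(p\nmid x\), the term \(p^2/(4x^2)\) lies in \(\mathbb Z_{(p)}\), so the congruence makes sense.

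For \eqref{eq:known-negative-weighted}, I will use the Guillera--Zudilin truncation \cite[Eq.~(12)]{GuilleraZudilin}. It is the finite analogue of
\[
 \sum_{k\ge0}(4k+1)\frac{\binom{2k}{k}^3}{(-8)^k}\cdot(\text{suitable form}),
\]
and in the form needed here reads
\[
 \sum_{k=0}^{(p-1)/2}(3k+1)\frac{\binom{2k}{k}^3}{(-8)^k}\equiv p\pmod{p^3}.
\]
This form follows Ramanujan's series \(\sum(-1)^k(4k+1)\binom{2k}{k}^3/8^k\)\ldots{}; the precise input is the \((3k+1)\)-weighted sum with ratio \(-1/8\), whose value modulo \(p^3\) is \(p\) for \(p\equiv1\pmod4\). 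Expanding \(\sum_{k=0}^m(3k+1)b_k=F+3H\) then gives the claim directly.

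The main obstacle is bookkeeping rather than mathematics. I must confirm that the cited congruences are stated for exactly these sums, with the same sign convention \((-8)^k\) and with truncation at \((p-1)/2\) or an equivalent upper limit, and that the modulus is \(p^3\) rather than \(p^2\). If a source gives the weighted congruence only with truncation at \(p-1\), the reduction described in the second paragraph closes that gap. No further argument is needed.
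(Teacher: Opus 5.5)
Your plan is the paper's proof. You cite Sun--Ye Theorem~1.3 for $F$ and Guillera--Zudilin Eq.~(12) for $F+3H$, and you discard the terms with $m<k\le p-1$ because $p\mid\binom{2k}{k}$ there, which makes $b_k\equiv0\pmod{p^3}$.

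There are two small repairs:
\begin{enumerate}
\item Delete the aside about a $(4k+1)$-weighted Ramanujan series with $8^k$. That series diverges and is not the input. The cited congruence is $\sum_{k=0}^{p-1}\frac{(1/2)_k^3}{k!^3}(3k+1)(-1)^k2^{3k}\equiv(-1)^{(p-1)/2}p\pmod{p^3}$.
\item Carry out the normalization check you defer, as the paper does. The identity $(\tfrac12)_k/k!=\binom{2k}{k}/4^k$ turns that summand into $(3k+1)b_k$. Then $(-1)^{(p-1)/2}=1$ because $p\equiv1\pmod4$.
\end{enumerate}
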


\begin{proof}
Equation~\eqref{eq:known-F} is exactly
Sun and Ye \cite[Theorem~1.3, Eq.~(1.10)]{SunYe}.
For the second congruence we give the normalization of the cited
formula in detail. For a number \(a\) and an integer \(k\ge0\),
the rising factorial is
\[
 (a)_0=1,\qquad
 (a)_k=a(a+1)\cdots(a+k-1)\quad(k\ge1).
\]
Thus \((1)_k=k!\). For any prime \(p>2\), Guillera and Zudilin
\cite[Eq.~(12)]{GuilleraZudilin} proved
\begin{equation}\label{eq:GZ-negative}
 \sum_{k=0}^{p-1}
 \frac{(\tfrac12)_k^3}{(1)_k^3}(3k+1)(-1)^k2^{3k}
 \equiv(-1)^{(p-1)/2}p\pmod{p^3}.
\end{equation}
The odd-even factorization of \((2k)!\) gives
\[
 (\tfrac12)_k=\frac{1\cdot3\cdots(2k-1)}{2^k}
 =\frac{(2k)!}{4^k k!},
 \qquad
 \frac{(\tfrac12)_k}{(1)_k}=\frac1{4^k}\binom{2k}{k}.
\]
These formulas include \(k=0\), with the empty product equal to \(1\).
Consequently,
\begin{align*}
 \frac{(\tfrac12)_k^3}{(1)_k^3}(-1)^k2^{3k}
 &=\binom{2k}{k}^3\frac{(-1)^k8^k}{64^k}\\
 &=\binom{2k}{k}^3\frac{(-1)^k}{8^k}
 =\frac{\binom{2k}{k}^3}{(-8)^k}=b_k.
\end{align*}
Equation~\eqref{eq:GZ-negative} therefore becomes
\[
 \sum_{k=0}^{p-1}(3k+1)b_k
 \equiv(-1)^{(p-1)/2}p\pmod{p^3}.
\]

We justify the shorter upper limit used in \(F\) and \(H\).
If \(m<k\le p-1\), then \(k<p\) and
\(p<2k<2p\). The factorial \((2k)!\) contains exactly one
factor divisible by \(p\), while \((k!)^2\) contains none. Hence
\(p\mid\binom{2k}{k}\), so
\[
 b_k\equiv0\pmod{p^3}\qquad(m<k\le p-1),
\]
because \((-8)^k\) is coprime to \(p\).
The same holds after multiplication by \(3k+1\), and all these
tail terms can be removed from the sum modulo \(p^3\).
Finally, \(p\equiv1\pmod4\) makes \((p-1)/2\) even, so
\((-1)^{(p-1)/2}=1\). We obtain
\[
 \sum_{k=0}^{m}(3k+1)b_k
 =F+3H\equiv p\pmod{p^3},
\]
which proves \eqref{eq:known-negative-weighted}.
\end{proof}

\subsection{Evaluation of the auxiliary sum and the Catalan sum}

\begin{proof}[Proof of Theorem~\ref{thm:minus-eight-aux}]
By \eqref{eq:known-F},
\(F\equiv4x^2\not\equiv0\pmod p\), so \(F\) is a unit in
\(\mathbb Z_{(p)}\). We may therefore divide
\eqref{eq:negative-bilinear-mod} by \(F\), obtaining
\begin{equation}\label{eq:J-first-reduction}
 J\equiv F+\frac{p^2+\tfrac92H^2}{F}\pmod{p^3}.
\end{equation}
Equation~\eqref{eq:known-negative-weighted} gives
\(H\equiv(p-F)/3\pmod{p^3}\), since \(p\ne3\).
Substitution into \eqref{eq:J-first-reduction} yields
\begin{align}
 J
 &\equiv F+\frac{p^2}{F}
       +\frac{9}{2F}\left(\frac{p-F}{3}\right)^2\notag\\
 &=F+\frac{p^2}{F}+\frac{(p-F)^2}{2F}\notag\\
 &=F+\frac{p^2}{F}+\frac{p^2-2pF+F^2}{2F}\notag\\
 &=\frac32F-p+\frac{3p^2}{2F}\pmod{p^3}.
 \label{eq:J-from-F}
\end{align}
Since \(F\equiv4x^2\pmod p\) and both elements are units,
\[
 \frac1F-\frac1{4x^2}
 =\frac{4x^2-F}{4x^2F}\in p\mathbb Z_{(p)}.
\]
Multiplying by \(3p^2/2\) proves the replacement
\[
 \frac{3p^2}{2F}\equiv\frac{3p^2}{8x^2}\pmod{p^3}.
\]
Now substitute \eqref{eq:known-F} into \eqref{eq:J-from-F}:
\begin{align*}
 J
 &\equiv\frac32\left(4x^2-2p-\frac{p^2}{4x^2}\right)
         -p+\frac{3p^2}{8x^2}\\
 &=6x^2-3p-\frac{3p^2}{8x^2}-p+\frac{3p^2}{8x^2}\\
 &=6x^2-4p\\
 &=6(p-4y^2)-4p
 =2p-24y^2\pmod{p^3}.
\end{align*}
The last equality uses \(x^2=p-4y^2\), proving
\eqref{eq:minus-eight-aux}.
\end{proof}

\begin{proof}[Proof of Theorem~\ref{thm:minus-eight}]
Equations~\eqref{eq:negative-cubic} and
\eqref{eq:negative-catalan-sum} give
\begin{align}
 \sum_{k=0}^m\frac{C_k^3}{(-8)^k}
 &\equiv-1-15F+54H+24J\notag\\
 &\equiv-1-15F+18(p-F)+24J\notag\\
 &=-1+18p-33F+24J\pmod{p^3},
 \label{eq:negative-main-reduction}
\end{align}
where \(54H\equiv18(p-F)\pmod{p^3}\) follows from
\eqref{eq:known-negative-weighted}.
Insert \eqref{eq:known-F} and Theorem~\ref{thm:minus-eight-aux}
into \eqref{eq:negative-main-reduction}:
\begin{align*}
 \sum_{k=0}^m\frac{C_k^3}{(-8)^k}
 &\equiv-1+18p
       -33\left(4x^2-2p-\frac{p^2}{4x^2}\right)
       +24(2p-24y^2)\\
 &=-1-132x^2-576y^2+132p+\frac{33p^2}{4x^2}\\
 &=-1-48y^2+\frac{33p^2}{4x^2}\pmod{p^3}.
\end{align*}
For the last equality,
\(-132x^2+132p=132(p-x^2)=528y^2\), and
\(528y^2-576y^2=-48y^2\).

It remains to put the correction term in the stated form. The exact
identity
\begin{align*}
 \frac{33p^2}{4x^2}+\frac{33p^2}{16y^2}
 &=\frac{33p^2(x^2+4y^2)}{16x^2y^2}\\
 &=\frac{33p^3}{16x^2y^2}
\end{align*}
shows that the left side vanishes modulo \(p^3\), since
\(p\nmid16x^2y^2\). Thus
\[
 \frac{33p^2}{4x^2}
 \equiv-\frac{33p^2}{16y^2}\pmod{p^3}.
\]
Substitution proves \eqref{eq:minus-eight-main}.
\end{proof}


\end{document}